\DeclareMathOperator{\vol}{vol}
\DeclareMathOperator{\ord}{ord}
\DeclareMathOperator{\ind}{ind}
\DeclareMathOperator{\Ker}{Ker}
\newtheorem{theorem}{Theorem}
\newtheorem*{theorem*}{Theorem}
\newtheorem{lemma}[theorem]{Lemma}
\newtheorem{prop}[theorem]{Proposition}
\newtheorem{cor}[theorem]{Corollary}
\newtheorem{defin}[theorem]{Definition}
\theoremstyle{remark}
\newcommand{\BA}{{\mathbb{A}}}
\newcommand{\BC}{{\mathbb{C}}}
\newcommand{\BN}{{\mathbb{N}}}
\newcommand{\BP}{{\mathbb{P}}}
\newcommand{\BQ}{{\mathbb{Q}}}
\newcommand{\BR}{{\mathbb{R}}}
\newcommand{\BZ}{{\mathbb{Z}}}
\newcommand{\mcI}{{\mathcal{I}}}
\newcommand{\mcJ}{{\mathcal{I}}}
\newcommand{\mcO}{{\mathcal{O}}}
\newcommand{\eps}{{\varepsilon}}
\begin{document}

\setlength{\parindent}{2ex}

\title[Local positivity and effective Diophantine approximation]{Local positivity and effective Diophantine approximation}

\begin{abstract}
In this paper we present a new approach to prove effective results in Diophantine approximation. We then use it to prove an effective theorem on the simultaneous approximation of two algebraic numbers satisfying an algebraic equation with complex coefficients. 
\end{abstract}

\author[M. Nickel]{Matthias Nickel}
\address{Johannes Gutenberg-Universit\"at, Institut f\"ur Mathematik, Staudingerweg 9, D-55128 Mainz, Germany}
\email{manickel@uni-mainz.de}

\date{\today}

\maketitle

\section{Introduction}
Positivity concepts for divisors play a crucial role in algebraic geometry. Among these concepts is \emph{ampleness}, which can also be interpreted intersection theoretically via the Nakai--Moishezon--Kleiman criterion. A weaker form of positivity is \emph{bigness}: a divisor $D$ is big iff the growth of the dimension of global sections of its multiples is maximal. The rate of this growth is then measured by the volume of the divisor \cite[Section 2.1]{PAG} and for ample divisors this is simply the top self-intersection by the asymptotic Riemann--Roch theorem \cite[Theorem 1.1.24]{PAG}. In \cite{Demailly} Demailly introduces a measure of local positivity of a divisor at a point, the Seshadri constant, in order to study the Fujita conjecture.\\

The connection between Diophantine approximation and positivity concepts is central to many results on Diophantine geometry. It is a key element in Vojta's proof of Mordell's conjecture \cite{VojtaSiegel} and in Faltings's proof of the Mordell--Lang conjecture \cite{F}. In \cite{FW} it has been shown that the constants showing up in Diophantine approximations can be obtained as the expectation of certain random variables coming from filtrations on the graded ring of sections of a divisor. Later \cite{FerrettiMumford,EF,CorvajaZannier,EF2} showed that these constants can be obtained via different geometric invariants. In \cite{mcroth} Diophantine approximation constants are shown to be related to volumes of divisors. In \cite{RuVojta}, \cite{RuWang} and \cite{HeierLevin} this result is extended to the more general case where not only points but closed subschemes are approximated.\\

Most results on Diophantine approximation rely on the construction of an auxiliary polynomial having a certain order of vanishing at given points. In this paper we present a new approach that follows Faltings's proof of the Mordell--Lang conjecture \cite{F} using information on local positivity at these points to study the vector spaces of suitable auxiliary polynomials.\\

One of the most important results in Diophantine approximation is Roth's theorem on the approximation of algebraic numbers by rationals \cite{KFRoth}. It states that for a given algebraic number $\alpha$ and a given $\eps > 0$ there are only finitely many rational numbers $p/q \in \BQ$ such that 
\begin{equation}
\label{eq:Roth}
\left|  \alpha - \frac{p}{q} \right| \leq q^{-(2+\eps)} \,.
\end{equation}

The proof of this theorem consists of two steps:

\begin{enumerate}[label=\arabic*.] 
\item First an auxiliary polynomial $P \in \BZ[X_1,\dots,X_n]$ having a certain order of vanishing at $(\alpha,\dots,\alpha)$ is constructed, which is then shown to vanish to a suitable order at $(p_1/q_1,\dots,p_n/q_n)$ where $p_i/q_i$ are solutions to \ref{eq:Roth}.
\item Next, one shows that there exists an upper bound for the order of $P$ at the point $(p_1/q_1,\dots,p_n/q_n)$ obtaining a contradiction. This upper bound may be either of geometric (Dyson's lemma \cite{Dyson} or rather its generalization by Esnault and Viehweg \cite{EV}) or of arithmetic nature (Roth's lemma \cite{KFRoth} and Faltings's product theorem \cite{F}).
\end{enumerate}

There are also many results on the simultaneous approximation of algebraic numbers by rationals. The generalization of Roth's theorem in this context is due to Schmidt \cite[Corollary to Theorem 1]{Schmidt}. Suppose that $\alpha_1,\dots,\alpha_r$ are algebraic numbers such that  $1,\alpha_1,\dots,\alpha_r$ are linearly independent over $\BQ$. Then for every $\varepsilon > 0$ there exist only finitely many $r$-tuples of rational numbers $(p_1/q,\dots,p_r/q)$ such that
\begin{equation}
\label{eq: Schmidt chapter}
\left|\alpha_i - \frac{p_i}{q} \right| \leq q^{-(1+1/r+\varepsilon)} 
\end{equation}
holds for all $1\leq i\leq r$.\\ 

Note that the theorems of Roth and Schmidt are not effective in that there is no bound for $q$ satisfying \ref{eq:Roth} and \ref{eq: Schmidt chapter} respectively. The earliest effective result in the approximation of a single algebraic number is the theorem of Liouville \cite{Liouville}, which is similar to Roth's theorem with exponent the degree $d$ of the algebraic number in question instead of $2+\varepsilon$. Fel'dman \cite{Feldman} obtained an improvement of Liouville's theorem, in which the exponent is strictly smaller than $d$, however, the difference is extremely small. For improvements and a different approach see \cite{BuGy,BuBorne,BoGm}. In the case of simultaneous approximation there are effective results where the tuple of algebraic numbers is given by rational powers of rational numbers \cite{BakerSimul,Osgood,Rickert,Bennett}.\\

Here we discuss a different strategy linking methods from positivity and Diophantine approximation that follows Faltings's proof of the Mordell--Lang conjecture \cite{F}. We consider homogeneous polynomials in two variables having large index at the point $(\alpha_1,\alpha_2)$ and \emph{a priori} small index at $(p_1/q,p_2/q)$ where $p_i/q$ is a suitably good rational approximation of $\alpha_i$ for $i=1,2$. \\
Using Faltings's Siegel lemma we can then ensure that we can find such a polynomial with suitably bounded coefficients in $\BZ$. Finally we give a bound for $q$ involving the index of $P$ at $(\alpha_1,\alpha_2)$ and $(p_1/q,p_2/q)$.\\

The novelty of this approach is that it avoids providing a zero estimate: we only need to suitably bound the dimension of the space of polynomials with given degree and given index at $(\alpha_1,\alpha_2)$, all of its conjugates and $(p_1/q,p_2/q)$. Therefore we only need a partial understanding of the volume function on blowups of $\BP^2$. The fact that we only consider one solution $(p_1/q,p_2/q)$ will finally make our theorem effective.\\

We obtain the following theorem.

\begin{theorem}
\label{thm:main}
Let $\alpha_1, \alpha_2$ be algebraic numbers and let $d:=[\BQ(\alpha_1,\alpha_2):\BQ]$. Suppose that $(\alpha_1,\alpha_2)$ and all of its conjugates lie on an irreducible curve of degree $m$ defined over $\BC$. Then there exists for all $\delta \in \BQ$ with $\delta > \max\{m,d/m\}$ an effectively computable constant $C_0(\alpha_1,\alpha_2,\delta,m)$ depending only on $(\alpha_1,\alpha_2)$, $m$ and $\delta$ such that for all pairs of rational numbers $(p_1/q,p_2/q)$ satisfying
\begin{equation}
\label{eq:main}
\left| \alpha_i - \frac{p_i}{q} \right| \leq q^{-\delta} \text{ for i=1,2}
\end{equation} 
we have $q \leq C_0(\alpha_1,\alpha_2,\delta,m)$.
\end{theorem}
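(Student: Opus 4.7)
My plan is to follow the strategy outlined in the introduction. Let $D, t, s$ be positive integers to be chosen, and consider the $\BQ$-vector space
$$V(D, t, s) := \{ P \in \BQ[X_1, X_2]_{\leq D} : \ind_{(\alpha_1,\alpha_2)}(P) \geq t \text{ and } \ind_{(p_1/q, p_2/q)}(P) \geq s\}.$$
Since $P$ has rational coefficients, the index condition at $(\alpha_1, \alpha_2)$ propagates by Galois to each of the $d$ conjugates, so geometrically, letting $\pi \colon X \to \BP^2$ be the blow-up at these $d$ conjugates and at $(p_1/q, p_2/q)$, we have $V(D, t, s) \otimes_\BQ \BC \simeq H^0(X, L_{t,s})$ for $L_{t, s} = DH - t \sum_{i=1}^d E_i - s E_{d+1}$. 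The curve $C$ enters through its strict transform $\widetilde{C}$, satisfying $\widetilde{C}^2 = m^2 - d$: its Zariski-decomposition role (it enters the negative part of $L_{t, s}$ precisely when $L_{t, s} \cdot \widetilde{C} = Dm - td < 0$) yields an explicit asymptotic lower bound $\dim V(D, t, s) \geq \tfrac12 \vol(L_{t, s}) + o(t^2)$ expressible in terms of $D, t, s, m, d$.

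Next, I would apply Faltings's Siegel lemma to the $\BZ$-lattice $V(D, t, s) \cap \BZ[X_1, X_2]$ to produce a nonzero $P$ with height bounded explicitly in terms of $D, t, s$ and the heights of $\alpha_1, \alpha_2$. Choosing parameters so that $V(D, t, s) \supsetneq V(D, t, s+1)$, we may arrange $P \in V(D, t, s) \setminus V(D, t, s+1)$, whence $\ind_{(p_1/q, p_2/q)}(P) = s$ and some Hasse derivative $P^{(i_0, j_0)}$ with $i_0 + j_0 = s$ is nonzero at $(p_1/q, p_2/q)$. The integer $q^{D-s} P^{(i_0, j_0)}(p_1/q, p_2/q)$ is nonzero, hence $\geq 1$ in absolute value. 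On the other hand, Taylor expansion at $(\alpha_1, \alpha_2)$---using that $P^{(i_0, j_0)}$ has index $\geq t - s$ there, together with $|\alpha_i - p_i/q| \leq q^{-\delta}$---yields
$$|P^{(i_0, j_0)}(p_1/q, p_2/q)| \leq c_1(D, \alpha_1, \alpha_2) \cdot H(P) \cdot q^{-\delta(t - s)},$$
which combines with the lower bound to give the key inequality
$$q^{\delta t - D - s(\delta - 1)} \leq c_2(D, \alpha_1, \alpha_2) \cdot H(P).$$

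Finally, I would optimize by setting $D = \lceil c t \rceil$ and $s = \lceil \beta t \rceil$ for suitable rationals $c, \beta \geq 0$ and letting $t \to \infty$. The volume lower bound gives $\log H(P) \leq \mathrm{const}(c, \beta, m, d) \cdot t$ from Siegel's inequality, while $\delta t - D - s(\delta - 1) \geq (\delta - c - \beta(\delta - 1) - o(1)) \cdot t$; dividing bounds $\log q$ by a finite constant depending only on $\alpha_1, \alpha_2, \delta, m$, producing the effective $C_0$. \textbf{The main obstacle} is the precise optimization: the codimension-to-dimension ratio in Siegel's bound blows up as $(c, \beta)$ approaches the boundary $\vol(L_{t, s}) = 0$, while the arithmetic exponent $\delta - c - \beta(\delta - 1)$ collapses when $c$ or $\beta$ are too large. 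Threading this needle requires the full hypothesis $\delta > \max\{m, d/m\}$, with $m$ controlling the bigness threshold of $L_{t, s}$ (via $\widetilde{C}$) and $d/m$ arising from the Zariski decomposition when $d > m^2$, where $\widetilde{C}$ has negative self-intersection.
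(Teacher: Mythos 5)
Your overall architecture matches the paper's: polynomials of bounded degree with prescribed index at $(\alpha_1,\alpha_2)$ (propagated to the conjugates by Galois), the identification of these spaces with sections of $DH - t\sum E_i - sE_{d+1}$ on the blowup, Faltings's Siegel lemma, and the Liouville-type comparison at $(p_1/q,p_2/q)$. The genuine gap is the sentence ``we may arrange $P \in V(D,t,s)\setminus V(D,t,s+1)$.'' This is exactly the step that classically requires a zero estimate (Roth's lemma, Dyson's lemma, the product theorem), and nothing in your argument supplies one. If what you intend is the successive-minima form of Faltings's Siegel lemma (produce $\dim V(D,t,s+1)+1$ independent short vectors in $V(D,t,s)$, so that one lies outside $V(D,t,s+1)$), then the relevant exponent is $b/(a-i)$ with $a-i=\dim V(D,t,s)-\dim V(D,t,s+1)=O(s)=O(t)$, while $b\sim D^2/2$; the exponent is then of order $D$, the height bound becomes $\exp(O(D^2))$, and your key inequality $q^{\delta t - D - s(\delta-1)}\leq c_2\,H(P)$ only yields $\log q = O(D)$, which is vacuous. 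The paper instead compares two spaces whose codimension is \emph{quadratic} in the degree: $A_k$ (index $\geq\theta k$ at the algebraic points, \emph{no} condition at the rational point) and its subspace $U_k$ (additionally index $\geq\theta_0 k$ at $(p_1/q,p_2/q)$ for a \emph{fixed} $\theta_0>0$), so that $\dim A_k-\dim U_k\sim \theta_0^2k^2/2$ and the Siegel exponent converges to the constant $1/\theta_0^2$.

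Making that codimension effective is the second missing ingredient. One needs an \emph{upper} bound on the dimension of the smaller space, i.e.\ the exact volume drop $\vol(L-\theta\sum E_i)-\vol(L-\theta\sum E_i-\theta_0E_{d+1})=\theta_0^2$, and this holds only when $\theta_0$ lies below the Seshadri constant of the ample divisor $L-\theta\sum E_i$ at the extra point --- a bound that must be uniform in the position of $(p_1/q,p_2/q)$ (which may, for instance, lie on $C$) for the final constant to be effective. The paper obtains this from the effective Matsusaka theorem for surfaces (Fern\'andez del Busto). Your Zariski-decomposition discussion of $\widetilde{C}$ recovers the nef threshold $\min\{1/m,m/d\}$ for $L-t\sum E_i$ (the paper's Proposition \ref{lowerSeshadri}) and a \emph{lower} bound on $\dim V(D,t,s)$ by condition counting, but neither of these controls the codimension, and the asserted formula for $\vol(L_{t,s})$ ``in terms of $D,t,s,m,d$'' is not uniform in $x_{d+1}$ without the Seshadri input.
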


Note that the strength of the theorem lies in the fact that the irreducible curve may be defined over $\BC$ and not only $\BQ$.\\We state a suitable choice for $C_0(\alpha_1,\alpha_2,\delta,m)$ in Corollary \ref{cor:C0}.

\subsection{Acknowledgments}

I would like to thank Matteo Costantini, Jan-Hendrik Evertse, Walter Gubler, Ariyan Javanpeykar, Lars Kühne, Victor Lozovanu, Martin Lüdtke, Marco Maculan, David McKinnon, Mike Roth, Jakob Stix, Paul Vojta and Jürgen Wolfart for many helpful discussions and suggestions. Furthermore, I would like to express my gratitude towards my former thesis advisor Alex Küronya for his support and many useful comments. \\The author was partially supported by the LOEWE grant ``Uniformized Structures in Algebra and Geometry''.

\subsection{Notation} In the remainder of this article we will denote by $\alpha_1$ and $\alpha_2$ algebraic numbers and let $d:=[\BQ(\alpha_1,\alpha_2):\BQ)]$.


\section{Seshadri constants and Zariski-decomposition on Blow-ups of $\BP^2$}

In this section we will be only concerned with varieties over $\BC$.\\

We begin by discussing Seshadri constants. These constants measuring local positivity of divisors were first defined by Demailly in \cite{Demailly} and their name is due to the Seshadri criterion for ampleness \cite[Remark 7.1]{Seshadri}.

\begin{defin}
Let $X$ be a smooth projective surface, let $M$ be a nef divisor on $X$ and let $x, x_1, x_2,\dots,x_n$ be points in $X$. Then the Seshadri constant of $M$ at $x$ is defined as
\[ \eps(X,M;x):= \sup \{ t \geq 0 \mid \pi_x^* M - t E \text{ is nef on } X'\} \]
where $\pi_x : X' \rightarrow X$ is the blowup of $X$ at $x$ and $E$ its exceptional divisor. Furthermore the multi-point Seshadri constant of $M$ at $x_1,\dots,x_n$ is defined as 
\[ \eps(X,M;x_1,\dots,x_n):= \sup \{ t \geq 0 \mid \pi_{x_1,\dots,x_n}^* M - t (E_1+\dots+E_n) \text{ is nef on } X''\} \]
where $\pi_{x_1,\dots,x_n}: X'' \rightarrow X$ is the blowup of $X$ at the points $x_1,\dots,x_n$ with exceptional divisors $E_1,\dots,E_n$.     
\end{defin}

The following lemma is well known. We provide a proof for the convenience of the reader.

\begin{lemma}
\label{ample}
Let $X''$ and $x_1,\dots,x_n$ be as above and let $M$ be ample. Then we have that $\pi_{x_1,\dots,x_n}^* M - t (E_1 +\dots+E_n)$ is ample for $t < \eps(X,M;x_1,\dots,x_n)$. 
\end{lemma}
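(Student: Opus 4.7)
Write $\pi := \pi_{x_1,\dots,x_n}$ and $E := E_1 + \dots + E_n$, and set $D := \pi^* M - tE$. The strategy is to verify the Nakai--Moishezon criterion on the smooth projective surface $X''$ (in its version for real divisors, due to Campana--Peternell), namely $D^2 > 0$ and $D \cdot C > 0$ for every irreducible curve $C \subset X''$. Since $t < \eps(X,M;x_1,\dots,x_n)$, fix some $s \in \BR$ with $t < s < \eps(X,M;x_1,\dots,x_n)$, so that $N := \pi^* M - sE$ is nef by definition of the multi-point Seshadri constant; the argument then consists in playing $D$ off against $N$.

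For the self-intersection, I would use the standard blowup identities $E_i \cdot E_j = -\delta_{ij}$, $\pi^* M \cdot E_i = 0$ and $(\pi^* M)^2 = M^2$. Nefness of $N$ gives $0 \leq N^2 = M^2 - ns^2$, hence $D^2 = M^2 - nt^2 \geq n(s^2 - t^2) > 0$ whenever $n \geq 1$ (the case $n = 0$ is trivial, since $X'' = X$ and $D = M$).

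For an irreducible curve $C \subset X''$ I would distinguish two cases. If $C = E_i$ for some $i$, then directly $D \cdot E_i = t > 0$. Otherwise $C$ is the strict transform of the irreducible curve $C_0 := \pi(C) \subset X$, so that $C = \pi^* C_0 - \sum_{i=1}^n m_i E_i$ with $m_i := \mathrm{mult}_{x_i}(C_0) \geq 0$, and a direct intersection calculation yields $D \cdot C = M \cdot C_0 - t \sum_i m_i$ and analogously $N \cdot C = M \cdot C_0 - s \sum_i m_i \geq 0$. If some $m_i$ is nonzero, subtracting these gives $D \cdot C \geq (s-t) \sum_i m_i > 0$; if all $m_i$ vanish, then $D \cdot C = M \cdot C_0 > 0$ by ampleness of $M$ on $X$. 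Nakai--Moishezon then yields the ampleness of $D$.

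There is no serious obstacle in this argument: the content is pure surface intersection theory on a blowup combined with the defining property of the multi-point Seshadri constant, and the only bookkeeping is the case distinction between exceptional and non-exceptional irreducible curves. A reader who prefers to avoid the real version of Nakai--Moishezon may first establish the lemma for rational $t < \eps(X,M;x_1,\dots,x_n)$ and then extend to all real $t$ in this range by the openness of the ample cone.
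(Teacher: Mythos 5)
Your proof is correct and uses essentially the same approach as the paper, namely Nakai--Moishezon on $X''$ with a case distinction between exceptional divisors and strict transforms, combined with nefness of $\pi^*M - sE$ for $s$ just above $t$. The only substantive difference is that you run the verification directly and explicitly check $D^2 > 0$, whereas the paper argues by contradiction and leaves the positivity of the self-intersection implicit; your version is therefore slightly more complete.
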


\begin{proof}
If $\pi_{x_1,\dots,x_n}^* M - t (E_1+\dots+E_n)$ is not ample for some $t < \eps(X,M;x_1,\dots,x_n)$, there exists a curve $C$ in $X''$ such that $(\pi_{x_1,\dots,x_n}^* M - t (E_1+\dots+E_n)) C = 0$. First note that $(\pi_{x_1,\dots,x_n}^* M - t (E_1+\dots+E_n)) E_i = t > 0$ for $i=1,\dots,n$, which shows that $C$ must be the strict transform of a curve on $X$. Now if $E_1+\dots+E_n$ does not intersect $C$, we have that $C$ is the pullback of a curve on $X$ not containing $x_1,\dots,x_n$ and this is impossible as $M$ is ample. Hence $(E_1+\dots+E_n) C > 0$ and therefore $(\pi_{x_1,\dots,x_n}^* M - t_0 E) C < 0$ for $t < t_0 < \eps(X,M;x_1,\dots,x_n)$ yields a contradiction. 
\end{proof}

Let us recall some properties of Seshadri constants.

\begin{lemma}[{{\cite[Example 5.1.4, Example~5.1.6]{PAG}}}]
\label{properties}
Let $X,X'$ and $x$ be as above and let $M$ be nef. Then:
\begin{enumerate}[label=\arabic*.]
	\item The Seshadri constant is homogenous:
	\[\eps(X,l M;x) = l \, \eps(X,M;x)\]
	for all $l \in \BN$.
	\item If $M$ is very ample then \[\eps(X,M;x_1,\dots,x_n) \geq 1 \,.\]
\end{enumerate}
\end{lemma}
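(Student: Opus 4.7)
My plan is to handle the two claims separately: (1) is a direct scaling argument from the definition, while (2) reduces to Kleiman's criterion combined with the geometric content of very ampleness.

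For (1), the key identity is $\pi_x^*(lM) - tE = l\bigl(\pi_x^*M - (t/l)E\bigr)$. Since nef-ness is invariant under multiplication by a positive rational, $\pi_x^*(lM) - tE$ is nef if and only if $\pi_x^*M - (t/l)E$ is nef. The substitution $s = t/l$ in the supremum defining $\eps(X,lM;x)$ converts it into $l$ times the supremum defining $\eps(X,M;x)$, giving the claimed homogeneity.

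For (2), I would apply Kleiman's criterion on $X''$ and check that $\pi^*M - (E_1+\cdots+E_n)$ meets every irreducible curve $C$ non-negatively. If $C$ is one of the exceptional divisors $E_i$, then $(\pi^*M - \sum_j E_j)\cdot E_i = -E_i^2 = 1 > 0$. Otherwise $C$ is the strict transform of an irreducible curve $C_0 \subset X$; writing $C = \pi^*C_0 - \sum_i m_i E_i$ with $m_i = \mathrm{mult}_{x_i}(C_0)$ and computing term by term yields
\[ (\pi^*M - E_1 - \cdots - E_n)\cdot C \;=\; M\cdot C_0 - \sum_i m_i, \]
so (2) reduces to the numerical bound $M\cdot C_0 \geq \sum_i m_i$.

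The hard part is precisely this last inequality. My plan is to use the very ampleness of $M$ to produce a divisor $D \in |M|$ that passes through every $x_i$ but does not contain $C_0$; each local intersection multiplicity $(D\cdot C_0)_{x_i}$ will then be at least $m_i$, and summing and comparing with the global intersection number $D\cdot C_0 = M\cdot C_0$ delivers the required inequality. The delicate point is exactly the existence of such a $D$, which is the geometric content of ``very ample'' at the given configuration of points; I expect to close this step by invoking the argument of \cite[Example~5.1.6]{PAG} rather than by reconstructing it from scratch.
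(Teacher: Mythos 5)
Your proof of part (1) is correct and is the standard one: the identity $\pi_x^*(lM)-tE=l\bigl(\pi_x^*M-(t/l)E\bigr)$ together with the invariance of nefness under positive scaling gives the homogeneity at once. (The paper offers no proof of this lemma; it is quoted from \cite[Examples~5.1.4, 5.1.6]{PAG}, so there is nothing to compare against on that side.)

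For part (2), your reduction to the inequality $M\cdot C_0\ge\sum_i m_i$ is correct, and for $n=1$ your plan does close: embedding $X$ into $\BP^N$ by $|M|$, one can choose a hyperplane through $x_1$ not containing $C_0$, and the local intersection multiplicity at $x_1$ is at least $\mathrm{mult}_{x_1}(C_0)$. But the step you flag as delicate genuinely fails for $n\ge 2$: the divisor $D\in|M|$ through all the $x_i$ and not containing $C_0$ need not exist, and the inequality $M\cdot C_0\ge\sum_i m_i$ is false in general. Take $X=\BP^2$, $M=\mcO_{\BP^2}(1)$ and $x_1,\dots,x_n$ collinear with $C_0$ the line through them: every member of $|M|$ containing all the $x_i$ is $C_0$ itself, $M\cdot C_0=1<n=\sum_i m_i$, and correspondingly $\eps(\BP^2,M;x_1,\dots,x_n)=1/n<1$. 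So this is not a gap that can be closed by invoking \cite[Example~5.1.6]{PAG} --- that reference establishes only the single-point bound, and the multi-point statement as printed in the lemma is false for $n\ge 2$. Note, however, that the paper only ever applies part (2) at a single point (in the proof of Theorem~\ref{voldiff}, to bound $\eps(\mathrm{Bl}_{x_1,\dots,x_d}(\BP^2),M;x_{d+1})$ from below), so your argument restricted to $n=1$ proves everything that is actually used.
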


For more about Seshadri constants the reader may consult \cite[Chapter 5]{PAG} and \cite{primer}.

We will use the following lemma, which is a direct consequence of \cite[Lemma 14.15]{Badescu} on the minimality of the negative part in Zariski decompositions. This argument was also used in \cite[Proposition 2.1]{KLM12}.

\begin{lemma}
\label{minimality}
Let $D$ be a pseudoeffective divisor on a smooth projective surface $X$ and let $D=P+N$ be its Zariski decomposition. For any decomposition $D=P'+ N'$ with $P'$ nef and $N'$ effective we have $N \leq N'$. 
\end{lemma}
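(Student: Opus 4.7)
The plan is the standard negative-definite argument for the minimality of the negative part of the Zariski decomposition, which is exactly what the cited Badescu lemma encodes. Recall that by definition of the Zariski decomposition, if $N = \sum_{i=1}^k a_i N_i$ is the irreducible expansion of the negative part, then $P \cdot N_i = 0$ for all $i$ and the intersection matrix $(N_i \cdot N_j)_{i,j}$ is negative definite.

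First, I would write the difference $N - N' = A - B$ as the unique decomposition into effective divisors $A$ and $B$ with disjoint support: concretely, $A$ collects those prime components where the coefficient of $N$ strictly exceeds the one of $N'$, and $B$ collects the reverse. In particular $A$ is supported on the support of $N$, hence is a nonnegative combination of the $N_i$.

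Next, I would compute $A^2$ by exploiting the identity $N - N' = P' - P$, which follows from $P + N = P' + N' = D$. This gives
\[ A^2 \;=\; A \cdot (A - B) + A \cdot B \;=\; A \cdot (N - N') + A \cdot B \;=\; A \cdot P' - A \cdot P + A \cdot B. \]
Now $A \cdot P' \geq 0$ since $P'$ is nef and $A$ effective; $A \cdot P = 0$ because $A$ is a nonnegative combination of the $N_i$ and $P \cdot N_i = 0$; and $A \cdot B \geq 0$ since $A$ and $B$ are effective with disjoint support. Therefore $A^2 \geq 0$.

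Finally, since $A$ is a nonnegative combination of the $N_i$ and the intersection matrix $(N_i \cdot N_j)$ is negative definite, the inequality $A^2 \geq 0$ forces $A = 0$, which is exactly $N \leq N'$. I do not anticipate a real obstacle here, as every ingredient is already built into the definition of the Zariski decomposition; the only care required is in setting up the decomposition $N - N' = A - B$ with disjoint support so that $A \cdot P = 0$ can be invoked cleanly.
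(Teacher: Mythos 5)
Your proof is correct and is exactly the standard negative-definiteness argument that the paper outsources to the cited lemma of B\u{a}descu: the paper gives no proof of its own, and your decomposition $N-N'=A-B$ with $A\cdot P'=A\cdot B\geq 0$, $A\cdot P=0$, and $A^2\geq 0$ forcing $A=0$ is precisely that argument. No gaps.
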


Let us now compute the multi-point Seshadri constant of $L=\mcO_{\BP^2}(1)$ at points that lie on an irreducible curve of degree $m$.

\begin{prop}
\label{lowerSeshadri}
Let $x_1,\dots,x_{d}$ be distinct points lying on a curve of degree $m$ in $\BP^2$, let $L$ be a line in $\BP^2$ such that $\mcO_{\BP^2}(L)=\mcO_{\BP^2}(1)$ and consider the blow-up $S$ of $\BP^2$ at $x_1,\dots,x_{d}$ with exceptional divisors $E_1,\dots,E_{d}$. Then for every $0 \leq t < \min \{1/m,m/d\}$ the divisor $L - t (E_1 + \dots + E_{d})$ is ample.
\end{prop}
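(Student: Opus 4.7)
My plan is to invoke Lemma~\ref{ample}, which reduces the claim to the Seshadri-type lower bound
\[
\eps(\BP^2,L;x_1,\dots,x_d)\;\geq\;\min\{1/m,\,m/d\}.
\]
Concretely, for any $t$ strictly less than $\min\{1/m,m/d\}$ I would show nefness of $D_t := \pi^* L - t(E_1 + \dots + E_d)$ on the blow-up $S$. By Kleiman's criterion it is enough to check that $D_t \cdot \Gamma \geq 0$ for every irreducible curve $\Gamma \subset S$, and such curves are either the exceptional divisors $E_i$ (for which $D_t \cdot E_i = t \geq 0$ trivially) or the strict transforms $\tilde{C'}$ of irreducible curves $C' \subset \BP^2$, for which a direct intersection computation gives
\[
D_t \cdot \tilde{C'} \;=\; \deg(C') \;-\; t\sum_{i=1}^d \operatorname{mult}_{x_i}(C').
\]

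I would split the second case according to whether $C' = C$. If $C' \neq C$, Bezout's theorem applied to the two distinct irreducible curves yields
\[
m\deg(C')\;=\;C\cdot C'\;\geq\;\sum_{i=1}^d \operatorname{mult}_{x_i}(C)\,\operatorname{mult}_{x_i}(C')\;\geq\;\sum_{i=1}^d\operatorname{mult}_{x_i}(C'),
\]
where the last inequality uses that each $x_i$ lies on $C$ and hence $\operatorname{mult}_{x_i}(C) \geq 1$; this gives $D_t \cdot \tilde{C'} \geq 0$ for all $t \leq 1/m$. If $C' = C$, assuming each $x_i$ is a smooth point of $C$ one has $\sum_i \operatorname{mult}_{x_i}(C) = d$ and therefore $D_t \cdot \tilde{C} = m - td \geq 0$ for $t \leq m/d$. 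Taking the minimum of the two bounds and applying Lemma~\ref{ample} yields ampleness throughout the asserted range.

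The main technical wrinkle I anticipate is the case $C' = C$ when some $x_i$ is a singular point of $C$: then $\sum_i \operatorname{mult}_{x_i}(C) > d$ and the naive bound $m/d$ degrades. This can be controlled using the arithmetic-genus inequality $\sum_i \binom{\operatorname{mult}_{x_i}(C)}{2} \leq \binom{m-1}{2}$ valid for any irreducible plane curve of degree $m$, which bounds the total multiplicity of $C$ at the $x_i$. In the Galois-orbit setting of Theorem~\ref{thm:main} the points $x_i$ are conjugates of $(\alpha_1,\alpha_2)$ and smoothness of $C$ at the $x_i$ is typically available, so in practice this edge case does not arise and the clean argument above suffices.
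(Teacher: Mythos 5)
Your argument takes a genuinely different route from the paper's. The paper writes $L_t=(1-mt)L+tC$ with $C=mL-(E_1+\cdots+E_d)$ effective, invokes the minimality of the negative part in the Zariski decomposition (Lemma~\ref{minimality}) to conclude that only $C$ and the $E_i$ can support the negative part, and then checks the intersection numbers $L_t\cdot E_i$, $L_t\cdot C$ and $L_t^2$. You instead verify nefness directly on every irreducible curve of $S$, disposing of all strict transforms $\tilde{C'}$ with $C'\neq C$ by Bezout. Both arguments ultimately come down to the same two intersection numbers; yours is more elementary (no Zariski decomposition needed), makes transparent which curve is responsible for each of the two bounds $1/m$ and $m/d$, and in the smooth case is complete and correct. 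Note that both arguments need $C$ irreducible (yours for the Bezout step, the paper's for treating $C$ as a single component), which the Proposition omits but Theorem~\ref{voldiff} assumes.

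The ``wrinkle'' you flag is not a wrinkle but a genuine gap, and your proposed repair via the genus inequality cannot work, because the statement as written is \emph{false} when some $x_i$ is a singular point of $C$. Take $m=3$, $d=9$: an irreducible nodal cubic with $x_1$ the node and $x_2,\dots,x_9$ smooth points. Then $\min\{1/m,m/d\}=1/3$, but the strict transform has class $3L-2E_1-E_2-\cdots-E_9$, so $D_t\cdot\tilde{C}=3-10t<0$ for $3/10<t<1/3$, and $D_t$ is not even nef there. The genus bound only yields $\sum_i\operatorname{mult}_{x_i}(C)\leq d+\binom{m-1}{2}$ (here $\leq 10$), which is exactly consistent with this failure. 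The paper's own proof has the same gap: it computes $L_t\cdot C$ for the class $C=mL-\sum E_i$, whereas nefness must be tested on the irreducible components of the negative part, and the relevant component is the strict transform $mL-\sum\operatorname{mult}_{x_i}(C)E_i$, whose intersection with $L_t$ is $m-t\sum\operatorname{mult}_{x_i}(C)$, not $m-td$. So your instinct is exactly right; the correct resolution is to add the hypothesis that $C$ is smooth at each $x_i$ (under which your clean argument, and the paper's, goes through verbatim). Be aware, though, that your closing remark that smoothness is ``typically available'' in the setting of Theorem~\ref{thm:main} is not a proof: nothing prevents a conjugate of $(\alpha_1,\alpha_2)$ from being a singular point of the given degree-$m$ curve, so this hypothesis would have to be carried through to the main theorem or argued for separately.
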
  

\begin{proof}
As $x_1,\dots,x_d$ lie on a curve of class $m$ we obtain that the class \[C:=m L-(E_1+\dots+E_{d})\] is effective on $\text{Bl}_{x_1,\dots,x_{d}} \BP^2$. This implies that for $0 \leq t \leq 1/m$ we have \[ L_{t} := L - t (E_1 + \dots + E_{d}) = (1-m t) L + t C \] is a decomposition as a sum of a nef and an effective divisor and Lemma \ref{minimality} shows that the support of the negative part of $L - t (E_1 + \dots + E_{d})$ must either be empty or may only contain $C$ and $E_i$ for $i=1,\dots,d$. Using 
\begin{align*}
L_t E_i &= t\\
L_{t} C &= m - d t\\
L_{t}^2 &= 1 - d t^2
\end{align*}
we conclude that $L_{t}$ is nef for $0 \leq t \leq \min \{1/m,m/d,1/\sqrt{d}\}$. Furthermore, as $1/m \leq m/d$ is equivalent to $1/m \leq 1/\sqrt{d}$, we conclude that the equality $\min \{1/m,m/d,1/\sqrt{d}\} = \min \{1/m,m/d\}$ holds and together with Lemma \ref{ample} this finishes the proof. 
\end{proof}

In what follows we will need to have a lower bound for the Seshadri constant of $L - t (E_1 + \dots + E_{d})$ for $0 \leq t < \min \{1/m,m/d\}$ at another point. In order to do this we will employ an effective version of Matsusaka's big theorem \cite{Matsusaka2,MM} for surfaces by Fern\'{a}ndez del Busto \cite{Fernandez}. Note that Siu has given an effective version of Matsusaka's big theorem valid in higher dimensions \cite{Siu,Siu2}.

\begin{theorem}[{{\cite{Fernandez}}}]
\label{Fernandez}
Let $A$ be an ample divisor on a smooth projective algebraic surface $X$. Then $l A$ is very ample for every 
\[ l > \frac{1}{2} \left\lfloor \frac{(A(K_X + 4 A)+1)^2}{A^2} + 3 \right\rfloor \,.\]
\end{theorem}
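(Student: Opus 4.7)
The plan is to reduce Fern\'andez del Busto's bound to an application of Reider's theorem for very ampleness on surfaces. Recall that Reider's theorem asserts that if $N$ is a nef divisor on a smooth projective surface $X$ with $N^2 \geq 10$, then $K_X + N$ is very ample unless, at each pair of (possibly infinitely near) points where very ampleness fails, there exists an effective curve $E$ through both points whose intersection data $(N \cdot E,\, E^2)$ lies in the short list $\{(0,-1),\,(0,-2),\,(1,-1),\,(1,0),\,(2,0)\}$.

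Setting $N := lA - K_X$ so that $lA = K_X + N$, my first task is to secure Reider's two numerical hypotheses. For $N$ to be nef (in fact ample) I would invoke Kleiman's criterion together with the Hodge index theorem, which makes $N$ ample once $l$ exceeds a threshold expressible in $A^2$ and $A \cdot K_X$. For the inequality $N^2 \geq 10$, I would expand $N^2 = l^2 A^2 - 2 l (A \cdot K_X) + K_X^2$ and solve the resulting quadratic; this yields an explicit bound of the same order of magnitude that is easily absorbed into the stated threshold.

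The core of the argument is to rule out the Reider-exceptional curves. Any such effective $E$ satisfies $N \cdot E \leq 2$ and $E^2 \geq -2$, which after substituting $N = lA - K_X$ yields
\[ l\,(A \cdot E) \leq 2 + K_X \cdot E. \]
Since $A$ is ample and $E$ is a curve, $A \cdot E \geq 1$, so the above inequality already constrains $l$ in terms of $A \cdot E$ and $K_X \cdot E$. To translate this into the intrinsic quantity $A \cdot (K_X + 4A)$ appearing in the theorem, I would combine the Hodge index inequality $(A \cdot E)^2 \geq A^2 \cdot E^2$ with the adjunction bound $K_X \cdot E + E^2 \geq -2$. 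A short case analysis over the five Reider types then produces upper bounds for $A \cdot E$ and $K_X \cdot E$ that coalesce into the expression $A \cdot (K_X + 4A) + 1$, the $+1$ accounting for integrality of intersection numbers.

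The main obstacle I foresee is the final arithmetic packaging: tracking the constants precisely so that the threshold appears exactly as $\tfrac{1}{2}\lfloor (A(K_X+4A)+1)^2/A^2 + 3\rfloor$, rather than a slightly weaker numerical constant. Once this bookkeeping is done, any $l$ strictly above the stated bound simultaneously renders $N$ nef, enforces $N^2 \geq 10$, and precludes every Reider-exceptional configuration, whence Reider's theorem delivers that $lA = K_X + N$ is very ample.
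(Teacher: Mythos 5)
First, a structural remark: the paper offers no proof of this statement — it is quoted verbatim from Fern\'andez del Busto's article — so there is no internal argument to compare yours against; your proposal must stand on its own. Its overall shape (apply Reider's very-ampleness criterion to $N:=lA-K_X$) is indeed the engine behind the cited result, but there is a genuine gap at exactly the step that makes the theorem nontrivial: securing nefness of $N$ and excluding the Reider-exceptional curves. You assert that ``Kleiman's criterion together with the Hodge index theorem'' makes $N$ ample once $l$ exceeds a threshold in $A^2$ and $A\cdot K_X$. But nefness of $lA-K_X$ requires an \emph{upper} bound on $K_X\cdot C$ that is \emph{linear} in $A\cdot C$, uniformly over all irreducible curves $C$, and neither tool you invoke supplies one: the Hodge index inequality $(A\cdot C)^2\ge A^2C^2$ bounds $C^2$ from above, and adjunction $K_X\cdot C+C^2\ge -2$ then bounds $K_X\cdot C$ only from \emph{below}, while a genus-type upper bound is quadratic in $A\cdot C$ and so cannot give nefness for any fixed $l$. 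The same missing upper bound undermines your case analysis of the exceptional types: from $N\cdot E\le 2$ you correctly get $l(A\cdot E)\le 2+K_X\cdot E$, but this is only contradictory once $K_X\cdot E$ has been controlled from above.

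The missing idea — and the reason the quantity $A\cdot(K_X+4A)$ appears in the threshold at all — is to first apply Reider to $4A$ itself: since $(4A)^2\ge 16$ and any exceptional $E$ would need $4A\cdot E\le 2$, which is impossible for $A$ ample, the divisor $B:=K_X+4A$ is very ample, hence nef. One then uses the nonnegativity of the Gram determinant of $A$, $B$, $C$ (a three-class form of the Hodge index theorem) to obtain
\[ B\cdot C \;\le\; \frac{(A\cdot B)(A\cdot C)+\sqrt{\bigl((A\cdot B)^2-A^2B^2\bigr)\bigl((A\cdot C)^2-A^2C^2\bigr)}}{A^2}\;\le\;\frac{2\,(A\cdot B)(A\cdot C)}{A^2}, \]
whence $K_X\cdot C=(B-4A)\cdot C\le\bigl(\tfrac{2A\cdot B}{A^2}-4\bigr)A\cdot C$ for every irreducible curve $C$. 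With this linear bound in hand, the nefness of $N=lA-K_X$, the inequality $N^2\ge 10$, and the exclusion of every pair $(N\cdot E,E^2)$ on Reider's list all follow for $l$ above the stated threshold, and the arithmetic packaging against $\tfrac12\lfloor (A\cdot(K_X+4A)+1)^2/A^2+3\rfloor$ closes. As written, your proposal defers precisely this control of $K_X\cdot C$ to an unspecified ``threshold expressible in $A^2$ and $A\cdot K_X$,'' so it does not yet constitute a proof.
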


Using this we are now ready to prove the following geometric theorem, which will be essential in the proof of the main theorem.

\begin{theorem}
\label{voldiff}
Let $x_1,\dots,x_{d}$ be distinct points lying on an irreducible curve of degree $m$ in $\BP^2$, let $x_{d+1} \in \BP^2$, let $L$ be a line in $\BP^2$ and consider the blow-up $X$ of $\BP^2$ at $x_1,\dots,x_{d+1}$ with exceptional divisors $E_1,\dots,E_{d+1}$. Let us define for $Q>0$
\[l(\theta):=\frac{1}{2} \left\lceil \frac{((-3+d \theta)Q+4 Q^2 (1-d \theta^2)+1)^2}{Q^2(1-d \theta^2)} + 3\right\rceil .\]
Then for all $\theta \in \BQ$ with denominator bounded by $Q$ satisfying $\theta<\min \{1/m,m/d\}$ and for every $0 \leq \mu \leq \frac{1}{Q l(\theta)}$ we have that 
\[ \vol_{X} (L - \theta(E_1 + \dots + E_{d})) - \vol_{X} (L - \theta (E_1 + \dots + E_{d}) - \mu E_{d+1}) = \mu^2\,. \]
\end{theorem}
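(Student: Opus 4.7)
The plan is to view the statement as a comparison of two volumes, both of which can be read off from intersection numbers once we establish nefness. Write $\pi\colon X \to X_{d}$ for the blow-up of $X_{d}:=\mathrm{Bl}_{x_1,\dots,x_{d}}\BP^2$ at $x_{d+1}$, and consider the $\BQ$-divisor $A := L - \theta(E_1+\dots+E_{d})$ on $X_{d}$. By Proposition \ref{lowerSeshadri} the divisor $A$ is ample on $X_{d}$ (the hypothesis $\theta < \min\{1/m,m/d\}$ being strict), so $\pi^*A$ is nef on $X$. What remains is to determine for which $\mu$ the divisor $\pi^*A - \mu E_{d+1}$ is still nef on $X$; this is governed exactly by the one-point Seshadri constant $\eps(X_{d},A;x_{d+1})$.

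To bound this Seshadri constant from below I would apply Fern\'andez del Busto's effective Matsusaka theorem (Theorem \ref{Fernandez}) to the integral ample divisor $QA = QL - a(E_1+\dots+E_{d})$, where $\theta = a/Q$. Using $K_{X_{d}} = -3L + E_1+\dots+E_{d}$ the numerical inputs needed are
\[(QA)^2 = Q^2(1-d\theta^2), \qquad (QA)\cdot K_{X_{d}} = Q(-3+d\theta),\]
so that $(QA)\cdot(K_{X_{d}}+4QA) = Q(-3+d\theta) + 4Q^2(1-d\theta^2)$; substituting these into Theorem \ref{Fernandez} gives precisely that $l(\theta)\cdot QA$ is very ample on $X_d$. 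The second part of Lemma \ref{properties} then yields $\eps(X_{d},l(\theta)QA; x_{d+1}) \geq 1$, and the homogeneity in the first part rescales this to $\eps(X_{d},A; x_{d+1}) \geq 1/(Ql(\theta))$. Because the nef cone is closed the supremum in the definition of the Seshadri constant is attained, so $\pi^*A - \mu E_{d+1}$ is nef on $X$ for every $0 \leq \mu \leq 1/(Ql(\theta))$.

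With both divisors nef on the surface $X$, their volumes agree with their top self-intersections. Using $L\cdot E_i = 0$, $E_i\cdot E_j = -\delta_{ij}$ and $\pi^*A\cdot E_{d+1} = 0$, one immediately obtains
\[\vol_X(\pi^*A) = 1-d\theta^2, \qquad \vol_X(\pi^*A - \mu E_{d+1}) = 1-d\theta^2-\mu^2,\]
and subtracting yields the desired $\mu^2$. The main obstacle is the middle step: one must turn the rational $\theta$ into integral data for Fern\'andez del Busto, propagate the factor $Q$ through each numerical input, and verify that the result matches on the nose the explicit expression for $l(\theta)$ in the statement. Once this bookkeeping is done, the volume computation in the third paragraph is essentially a two-line check.
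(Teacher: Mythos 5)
Your proposal is correct and follows essentially the same route as the paper: Proposition \ref{lowerSeshadri} for ampleness of $L-\theta(E_1+\dots+E_d)$, Fern\'andez del Busto applied to the integral divisor $Q$ times that class to bound the Seshadri constant at $x_{d+1}$ below by $1/(Ql(\theta))$, and then the volume-equals-self-intersection computation for nef divisors. Your explicit verification that the intersection numbers $(QA)^2$ and $(QA)\cdot K_{X_d}$ reproduce the stated formula for $l(\theta)$ is a detail the paper only asserts, so the write-up is, if anything, slightly more complete.
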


\begin{proof}
By Proposition \ref{lowerSeshadri} above we know that $M:=(L - \theta (E_1 + \dots + E_{d}))$ is ample. Note that $l(\theta) = \frac{1}{2} \left\lceil \frac{(Q M(K_X + 4 Q M)+1)^2}{(Q M)^2} + 3 \right\rceil$. By Theorem \ref{Fernandez} and because $Q M$ is an integral ample divisor, we now know that the divisor $l(\theta) Q M$ is very ample and therefore $\eps(\text{Bl}_{x_1,\dots,x_{d}}(\BP^2),M; x_{d+1}) \geq \frac{1}{Q l(\theta)}$ by the properties of Seshadri constants in Lemma \ref{properties}. Therefore $L - \theta(E_1 + \dots + E_{d})-\mu E_{d+1}$ is nef for $0 \leq \mu \leq \frac{1}{Q l(\theta)}$ and the statement of the theorem follows by the asymptotic Riemann--Roch theorem \cite[Corollary 1.4.41]{PAG}. 
\end{proof}


\section{Bounding the denominator of a good approximation}
In this section we will bound the denominator $q$ of a good approximation $(p_1/q,p_2/q)$ of $(\alpha_1,\alpha_2)$ using a polynomial $P \in \BZ[X_1,X_2]$ with suitably bounded coefficients and suitable index at $(\alpha_1,\alpha_2)$ and $(p_1/q,p_2/q)$. This chapter closely follows \cite[\S D.5]{HS}. For the convenience of the reader we give proofs as we need slightly different statements than those in \cite{HS}. \\

\begin{defin}
Let \[P = \sum_{j \in \BN_0^2} \, a_{j_1,j_2} {X_1}^{j_1} {X_2}^{j_2}\] be a polynomial with coefficients in $\BC$.
\begin{enumerate}[label=\arabic*.]
    \item For a multi-index $j \in \BN_0^2$ we define a differential operator $\partial_j$ via \[\partial_j P := \frac{1}{j_1! j_2!} \frac{\partial^{j_1+j_2}}{\partial X_1^{j_1} \partial X_2^{j_2}} P .\]
    \item The index of $P$ at $x=(x_1,x_2)$ with respect to the weights $(r_1,r_2) \in \BN^2$ is the nonnegative real number \[\ind_{(x_1,x_2;r_1,r_2)}(P) := \min \{ j_1/r_1+j_2/r_2 \mid j \in \BN_0^2, \partial_j P (x) \neq 0\}.\]
		\item If $P \in \BZ[X_1,X_2]$, the naive height of $P$ is defined as \[|P| := max \{ |a_j| \,\mid j \in \BN_0^2\} \,.\]
\end{enumerate}  
\end{defin}

Let us summarize some properties of the index and the differential operators $\partial_j$ for later use.

\begin{lemma}[{{\cite[Lemma D.3.1, Lemma D.3.2]{HS}}}]
\label{naivediff}
Let $P \in \BC[X_1,X_2]$ be a polynomial such that its degree is bounded by $k$. Then:
\begin{enumerate}[label=\arabic*.]
		\item $\ind_{(x_1,x_2;r_1,r_2)}(\partial_{j} P) \geq \ind_{(x_1,x_2;r_1,r_2)}(P) - j_1/r_1-j_2/r_2$,
    \item if $P \in \BZ[X_1,X_2]$, then $\partial_{j} P \in \BZ[X_1,X_2]$ for all $j \in \BN_0^2$,
    \item $|\partial_{j} P| \leq 4^{k} |P|$.
\end{enumerate}
\end{lemma}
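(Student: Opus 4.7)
The plan is to establish the three items separately by elementary manipulation of the explicit formula for $\partial_j$. All of them follow from the same expansion of $P$ into monomials, so I would organize the proof around that single computation.

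For item~1, I would use the composition identity
\[\partial_i \circ \partial_j \;=\; \binom{i_1+j_1}{i_1}\binom{i_2+j_2}{i_2}\, \partial_{i+j},\]
which is immediate by differentiating monomials and keeping track of factorials. Now if $i \in \BN_0^2$ is any multi-index with $\partial_i(\partial_j P)(x) \neq 0$, the identity forces $\partial_{i+j} P(x) \neq 0$, so by definition of the index
\[\frac{i_1+j_1}{r_1} + \frac{i_2+j_2}{r_2} \;\geq\; \ind_{(x_1,x_2;r_1,r_2)}(P).\]
Minimizing the left-hand side over all admissible $i$ and transposing $j_1/r_1+j_2/r_2$ to the right-hand side gives exactly the claim.

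For items~2 and~3, I would simply expand: writing $P = \sum a_i X_1^{i_1}X_2^{i_2}$ one computes
\[\partial_j P \;=\; \sum_{i_1 \geq j_1,\, i_2 \geq j_2} a_i \binom{i_1}{j_1}\binom{i_2}{j_2}\, X_1^{i_1-j_1} X_2^{i_2-j_2}.\]
Integrality of binomial coefficients yields item~2 without further work. For item~3, each coefficient of $\partial_j P$ has absolute value bounded by $|P|\,\binom{i_1}{j_1}\binom{i_2}{j_2}$, and the trivial estimates $\binom{i_1}{j_1} \leq 2^{i_1} \leq 2^k$ and $\binom{i_2}{j_2} \leq 2^{i_2} \leq 2^k$ give $|\partial_j P| \leq 4^k |P|$.

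The statement offers essentially no conceptual obstacles; it is a bookkeeping check justifying the use of the divided derivatives $\partial_j$. The only point that requires a moment of care is the convention for \emph{degree}: the bound $4^k$ is the right one precisely when $k$ bounds the degree of $P$ in each variable separately, which is the reading of "degree bounded by $k$" that is consistent with the source \cite{HS} and with the way this lemma will be applied later.
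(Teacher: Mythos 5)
Your proof is correct and is the standard argument; the paper offers no proof of its own here but simply cites \cite{HS}, where essentially the same computations (the composition identity $\partial_i\circ\partial_j=\binom{i_1+j_1}{i_1}\binom{i_2+j_2}{i_2}\partial_{i+j}$ for item~1 and the binomial expansion for items~2 and~3) appear. One small correction to your closing remark: the bound $4^k|P|$ is valid under either reading of ``degree bounded by $k$'' --- with total degree $\leq k$ one even gets the sharper $\binom{i_1}{j_1}\binom{i_2}{j_2}\leq 2^{i_1+i_2}\leq 2^k\leq 4^k$ --- so no care about the degree convention is actually required, and your argument goes through as written.
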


From now on we will make the assumption $r_1 = r_2 = k$. In particular we have that $\ind_{(\alpha_1, \alpha_2; k, k)}(P) = \ord_{(\alpha_1, \alpha_2)}(P)/k$.\\
In the following two lemmas we will provide a bound on the denominator and absolute value of derivatives of a polynomial $P \in \BZ[X_1,X_2]$ at $(p_1/q,p_2/q)$.

\begin{lemma}
\label{heightdiff}
Let $P \in \BZ[X_1,X_2]$ be a polynomial of degree less or equal $k$ and let $j \in \BN_0^2$ be a multi-index. Assume that $\partial_j P(p_1/q,p_2/q) = s/m$ where $s \in \BZ, m \in \BN$ and $s,m$ are coprime. Then \[ m \leq q^{k}\,.\]
\end{lemma}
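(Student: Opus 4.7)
The plan is to clear denominators by writing $\partial_j P$ explicitly as an integer-coefficient polynomial and evaluating at $(p_1/q, p_2/q)$. By Lemma \ref{naivediff}(2), $\partial_j P \in \BZ[X_1, X_2]$, and since differentiation cannot increase the total degree, we have
\[
\partial_j P \;=\; \sum_{i_1 + i_2 \leq k} b_{i_1, i_2}\, X_1^{i_1} X_2^{i_2}
\]
with all $b_{i_1, i_2} \in \BZ$.

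Next, I would substitute $X_1 = p_1/q$ and $X_2 = p_2/q$ and use the bound $i_1 + i_2 \leq k$ to pull out a common denominator of $q^k$:
\[
\partial_j P\!\left(\tfrac{p_1}{q}, \tfrac{p_2}{q}\right) \;=\; \sum_{i_1 + i_2 \leq k} b_{i_1, i_2}\, \frac{p_1^{i_1} p_2^{i_2}}{q^{i_1 + i_2}} \;=\; \frac{1}{q^k} \sum_{i_1 + i_2 \leq k} b_{i_1, i_2}\, p_1^{i_1} p_2^{i_2}\, q^{k - i_1 - i_2} \;=\; \frac{N}{q^k},
\]
where $N \in \BZ$ since every factor in the sum lies in $\BZ$.

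Finally I would compare this to the reduced fraction $s/m$: from $s/m = N/q^k$ and $\gcd(s, m) = 1$, it follows that $m$ divides $q^k$, hence $m \leq q^k$. There is no real obstacle here — the only small point to check is that factoring out $q^k$ (rather than only $q^{i_1+i_2}$) still leaves an integer numerator, which is immediate because $k - i_1 - i_2 \geq 0$.
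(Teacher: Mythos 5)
Your argument is correct and is essentially the paper's own proof, just written out in more detail: both invoke Lemma \ref{naivediff}(2) to get $\partial_j P \in \BZ[X_1,X_2]$ and then observe that evaluating at $(p_1/q,p_2/q)$ produces a fraction with denominator dividing $q^k$, so the reduced denominator $m$ satisfies $m \leq q^k$. No issues.
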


\begin{proof}
Using Lemma \ref{naivediff} we obtain that the the coefficients of $\partial_j P$ are in $\BZ$. Therefore $\partial_j P(p_1/q,p_2/q)$ is a sum of terms whose denominators are divisors of $q^{k}$ giving us the desired bound.
\end{proof}

\begin{lemma}
\label{normdiff}
Let $P \in \BZ[X_1,X_2]$ of degree less or equal $k$ with $k \geq 6$ and let $j \in \BN_0^2$ be a multi-index. Let $\theta$ be the index of $P$ at $(\alpha_1,\alpha_2)$ with respect to $(k,k)$, let $0<\theta_0<\theta$, let $\delta>0$ and let $N \in \BN$.\\
Then it holds that for $(p_1/q,p_2/q) \in \BQ$ satisfying 
\begin{equation}
\label{eq:deltaroth}
\left|\alpha_i - \frac{p_i}{q} \right| \leq N q^{-\delta} \text{ for $i=1,2$}
\end{equation}
and for every $j=(j_1,j_2) \in \BN_0^2$ such that $\frac{j_1+j_2}{k} \leq \theta_0$ we have \[ \left| \partial_j P(p_1/q,p_2/q) \right| \leq 64^k \left|P \right| (\max\{\left|\alpha_1\right|,\left|\alpha_2\right|\})^{k} N^{2 k} q^{-k \delta (\theta - \theta_0)}.\]     
\end{lemma}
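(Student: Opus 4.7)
The plan is to Taylor expand $\partial_j P$ about $(\alpha_1,\alpha_2)$ and exploit the vanishing forced by the index hypothesis. Since $\deg P \leq k$, the polynomial $\partial_j P$ admits the finite Taylor expansion
\[ \partial_j P(p_1/q, p_2/q) = \sum_{i \in \BN_0^2} \partial_i \partial_j P(\alpha_1, \alpha_2) \left(\frac{p_1}{q} - \alpha_1\right)^{i_1} \left(\frac{p_2}{q} - \alpha_2\right)^{i_2}. \]

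Applying Lemma \ref{naivediff}.1 to $P$ shows that the index of $\partial_j P$ at $(\alpha_1,\alpha_2)$ with respect to $(k,k)$ is at least $\theta - (j_1+j_2)/k \geq \theta - \theta_0$. Hence $\partial_i \partial_j P(\alpha_1,\alpha_2) = 0$ whenever $i_1 + i_2 < k(\theta - \theta_0)$, and since also $0 \leq i_1, i_2 \leq k$, at most $(k+1)^2$ multi-indices $i$ contribute to the sum.

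I would then estimate each surviving summand separately. Applying Lemma \ref{naivediff}.3 twice gives $|\partial_i \partial_j P| \leq 16^k |P|$, and since $\partial_i \partial_j P$ has at most $(k+1)^2$ monomials of total degree $\leq k$, the triangle inequality yields
\[ |\partial_i \partial_j P(\alpha_1, \alpha_2)| \leq (k+1)^2 \cdot 16^k |P| \cdot (\max\{|\alpha_1|, |\alpha_2|\})^k. \]
For the displacement factor, the hypothesis \eqref{eq:deltaroth}, together with $N \geq 1$, $q \geq 1$, and the bounds $k(\theta-\theta_0) \leq i_1 + i_2 \leq 2k$, gives
\[ \left| \left(\frac{p_1}{q} - \alpha_1\right)^{i_1} \left(\frac{p_2}{q} - \alpha_2\right)^{i_2} \right| \leq N^{2k} q^{-\delta k(\theta - \theta_0)}. \]

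Summing the $(k+1)^2$ contributions produces a net prefactor of $(k+1)^4 \cdot 16^k$ in front of $|P|(\max\{|\alpha_1|,|\alpha_2|\})^k N^{2k} q^{-\delta k(\theta-\theta_0)}$. The remaining point is the elementary inequality $(k+1)^4 \leq 4^k$ for $k \geq 6$ (already tight at $k=6$: $7^4 = 2401 \leq 4096 = 4^6$), which absorbs the polynomial factor into $64^k = 16^k \cdot 4^k$ and yields exactly the stated bound. The only real bookkeeping obstacle is ensuring the constants line up so that the hypothesis $k \geq 6$ is precisely what makes this final inequality valid; everything else follows mechanically from the Taylor expansion and Lemma \ref{naivediff}.
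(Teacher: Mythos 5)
Your proof is correct and follows essentially the same route as the paper: Taylor expansion of $\partial_j P$ about $(\alpha_1,\alpha_2)$, the index lower bound $\theta-\theta_0$ from Lemma \ref{naivediff}, the double application of $|\partial_i P|\leq 4^k|P|$, and term counting absorbed using $k\geq 6$. The only difference is cosmetic bookkeeping of the combinatorial prefactor (your $(k+1)^4\leq 4^k$ versus the paper's two factors of $2^k$), both of which land on $64^k$.
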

 
\begin{proof}
First note that for all $i \in \BN_0^2$ we have that $\partial_i\partial_j P(\alpha_1,\alpha_2)$ is a sum of at most $1/2 \, (k+1) (k+2)  \leq 2^k$ terms because $k \geq 6$. These terms are themselves bounded by \[\left|\partial_i \partial_j P \right| (\max\{\left|\alpha_1\right|,\left|\alpha_2\right|\})^{k} \leq 16^k |P| (\max\{\left|\alpha_1\right|,\left|\alpha_2\right|\})^{k}\] where we have used Lemma \ref{naivediff} two times. We may now expand $\partial_j P$ around $(\alpha_1,\alpha_2)$ and use that Lemma \ref{naivediff} implies $\ind_{(\alpha_1,\alpha_2;k,k)}(\partial_j P) \geq \theta - \theta_0$ to obtain
\begin{equation*} \partial_j P (p_1/q,p_2/q) = \sum_{\substack{1 \leq i_1,i_2 \leq k \\(i_1+i_2)/k \geq \theta - \theta_0}} (\partial_i \partial_j P)(\alpha_1,\alpha_2) (p_1/q - \alpha_1)^{i_1} (p_2/q - \alpha_2)^{i_2}
\end{equation*}
and by assumption \ref{eq:deltaroth}, the fact that the number of terms above is bounded by $(k+1)^2 \leq 2^k$ and the bounds above we have
\[\left|\partial_j P (p_1/q,p_2/q) \right| \leq 64^k |P| (\max\{\left|\alpha_1\right|,\left|\alpha_2\right|\})^{k} N^{2 k}  q^{-k \delta (\theta - \theta_0)} \,.  \]
\end{proof}

Using the results above we obtain a bound for the denominator of a good approximation as follows.

\begin{lemma}
\label{lower}
Let $k \geq 6$ be a positive integer. Suppose that $(p_1/q,p_2/q) \in \BQ^2$ is a solution of inequality \ref{eq:deltaroth} for given $\delta > 1/(\theta - \theta_0)$, $N \in \BN$ and let $0 < \theta_0 < \theta$ be given. 
Now assume that $P \in \BZ[X_1,X_2]$ satisfies the following properties:
\begin{enumerate}[label=\arabic*.]
    \item the degree of $P$ is at most $k$,
    \item the index of $P$ at  $(\alpha_1,\alpha_2)$ with respect to the weights $(k,k)$ satisfies \[\ind_{(\alpha_1,\alpha_2;k,k)}(P) \geq \theta,\]
    \item $|P| \leq B^{k}$, where $B$ depends only on $(\alpha_1,\alpha_2)$, $k$ and $\delta$.
\end{enumerate}
Let \[C(\alpha_1,\alpha_2,\delta,N) :=\left(64 B  \max\{\left|\alpha_1\right|,\left|\alpha_2\right|\} N^{2} \right)^{\frac{1}{\delta (\theta - \theta_0)-1}} \,.\]
Then it holds that if \[ \ind_{(p_1/q, p_2/q; k, k)}(P) < \theta_0\,, \] we have $q \leq C(\alpha_1,\alpha_2,\delta,N)$.
\end{lemma}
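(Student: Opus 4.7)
The plan is to derive the conclusion by a Liouville-style sandwich: find a derivative of $P$ that is nonzero at $(p_1/q,p_2/q)$, bound its absolute value from below using the fact that it is a nonzero rational with a controlled denominator, and bound it from above by exploiting the high index at $(\alpha_1,\alpha_2)$ together with the good approximation hypothesis. A comparison of the two bounds then forces $q$ to be small.

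First, I would use the hypothesis $\ind_{(p_1/q,p_2/q;k,k)}(P) < \theta_0$ to choose a multi-index $j=(j_1,j_2)\in\BN_0^2$ such that $(j_1+j_2)/k < \theta_0$ and $\partial_j P(p_1/q,p_2/q)\neq 0$. Writing $\partial_j P(p_1/q,p_2/q)=s/m$ in lowest terms with $s\in\BZ\setminus\{0\}$ and $m\in\BN$, Lemma \ref{heightdiff} gives $m\leq q^k$, so
\[
\left|\partial_j P(p_1/q,p_2/q)\right| \;\geq\; \frac{1}{q^k}.
\]

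Next, since the multi-index $j$ satisfies $(j_1+j_2)/k\leq \theta_0 < \theta = \ind_{(\alpha_1,\alpha_2;k,k)}(P)$ and since, by hypothesis, $(p_1/q,p_2/q)$ satisfies the approximation inequality \ref{eq:deltaroth}, Lemma \ref{normdiff} applied to $\partial_j P$ together with the bound $|P|\leq B^k$ yields
\[
\left|\partial_j P(p_1/q,p_2/q)\right| \;\leq\; 64^{k}\, B^{k}\, (\max\{|\alpha_1|,|\alpha_2|\})^{k}\, N^{2k}\, q^{-k\delta(\theta-\theta_0)}.
\]

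Combining the two inequalities and taking $k$-th roots gives
\[
q^{\delta(\theta-\theta_0)-1} \;\leq\; 64\, B\, \max\{|\alpha_1|,|\alpha_2|\}\, N^{2}.
\]
Since $\delta > 1/(\theta-\theta_0)$ the exponent on the left is strictly positive, so raising to the power $1/(\delta(\theta-\theta_0)-1)$ produces exactly the bound $q\leq C(\alpha_1,\alpha_2,\delta,N)$ in the statement. There is no genuine obstacle here: the work has already been done in Lemmas \ref{heightdiff} and \ref{normdiff}, and the only care needed is to pick the right multi-index $j$ witnessing the index bound at $(p_1/q,p_2/q)$ and to apply the normalization that both lemmas involve the same constant $k$.
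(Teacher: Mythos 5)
Your proof is correct and follows essentially the same route as the paper's: choose a witnessing multi-index $j$, get the lower bound $|\partial_j P(p_1/q,p_2/q)|\geq q^{-k}$ from Lemma \ref{heightdiff} and the fact that a nonzero rational with denominator $m$ has absolute value at least $1/m$, get the upper bound from Lemma \ref{normdiff} with $|P|\leq B^k$, and compare after taking $k$-th roots. The only difference is cosmetic ordering of the two bounds.
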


\begin{proof}
Assume $\ind_{(p_1/q, p_2/q; k, k)}(P) < \theta_0$ and let $j \in \BN_0^2$ with $\frac{j_1+j_2}{k} < \theta_0$ be such that $\partial_j P(p_1/q, p_2/q) \neq 0$. Now Lemma \ref{normdiff} and the bound on $|P|$ give us that
\[
|\partial_j P(p_1/q,p_2/q)| \leq (64 B \max\{\left|\alpha_1\right|,\left|\alpha_2\right|\})^{k} N^{2 k} q^{-k \delta (\theta - \theta_0)} \,.  
\]

We use the principle that there is no integer strictly between $0$ and $1$ to obtain
\[ 1/m \leq (64 B \max\{\left|\alpha_1\right|,\left|\alpha_2\right|\})^{k} N^{2 k} q^{-k \delta (\theta - \theta_0)} \]
where $\partial_j P(p_1/q,p_2/q)=s/m$ with $s \in \BZ,m \in \BN$ and $s$ and $m$ coprime. 

Finally lemma \ref{heightdiff} gives
\[ q^{-k} \leq (64 B \max\{\left|\alpha_1\right|,\left|\alpha_2\right|\})^{k} N^{2 k} q^{-k \delta (\theta - \theta_0)} \]

and after taking $k$-th roots and simplifying we obtain
\[ q^{\delta (\theta - \theta_0)-1} \leq 64 B  \max\{\left|\alpha_1\right|,\left|\alpha_2\right|\} N^{2} \,. \]
The statement of the theorem now holds for \[C(\alpha_1,\alpha_2,\delta,N) :=\left(64 B  \max\{\left|\alpha_1\right|,\left|\alpha_2\right|\} N^{2} \right)^{\frac{1}{\delta (\theta - \theta_0)-1}} \,.\]
\end{proof}


\section{Finding a suitable global section}

For this section we will fix an embedding $\BA^2 \hookrightarrow \BP^2$ and a line $L \subset \BP^2$ such that the global sections of $\mcO_{\BP^2}(k L)$ restricted to $\BA^2$ are the polynomials of degree less or equal $k$, and view $(\alpha_1,\alpha_2)$, all of its conjugates and $(p_1/q,p_2/q)$ as elements of $\BP^2$ via this embedding. In this chapter we will always indicate which base field we are working over. 

We now state Faltings's version of Siegel's lemma. 
\begin{lemma}[{{\cite[Proposition 2.18]{F}}}]
\label{FaltingsSiegellemma}
Let $V,W$ be two finite dimensional normed $\BR$-vector spaces and let $M \subset V$ and $N \subset W$ be $\BZ$-lattices of maximal rank. Let further $\phi : V \rightarrow W$ be a linear map such that $\phi(M) \subset N$. Let $b:=\dim(V)$ and $a:=\dim(\Ker(\phi))$ and assume that there exists a constant $C \geq 2$ such that 
\begin{enumerate}[label=\arabic*.]
    \item $M$ is generated by elements of norm at most $C$,
    \item the norm of $\phi$ is bounded by $C$,
    \item all non-trivial elements of $M$ and $N$ have norm at least $1/C$.
\end{enumerate}
For $1 \leq i \leq b$ set 
\[\lambda_i := \inf \{\lambda > 0 \mid \exists \text{ $i$ linearly independent vectors of norm $\leq \lambda$ in $\Ker(\phi)\cap M$} \} \,. \]
Then it holds that \[ \lambda_{i+1} \leq (C^{3b} b!)^{1/(a-i)}\,.\]
\end{lemma}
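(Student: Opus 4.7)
The plan is to apply Minkowski's theorem on successive minima to the sublattice $\Lambda := \Ker(\phi) \cap M$, which has full rank $a$ inside the subspace $\Ker(\phi) \subset V$, after bounding its covolume in terms of the given data.

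First I would bound the covolume of $\Lambda$. From the short exact sequence $0 \to \Lambda \to M \xrightarrow{\phi} \phi(M) \to 0$ and the orthogonal decomposition $V = \Ker(\phi) \oplus \Ker(\phi)^\perp$ one obtains
\[
d(\Lambda) \cdot d(\phi(M)) = d(M) \cdot \lvert \det(\phi|_{\Ker(\phi)^\perp}) \rvert \, .
\]
Condition 1 supplies a $\BZ$-basis of $M$ of norm at most $C$, so Hadamard's inequality gives $d(M) \leq C^{b}$. Condition 2, together with the singular-value interpretation of the determinant, yields $\lvert \det(\phi|_{\Ker(\phi)^\perp}) \rvert \leq C^{b-a}$. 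Condition 3 ensures that every nonzero element of $\phi(M) \subset N$ has norm at least $1/C$, so Minkowski's first theorem forces $d(\phi(M)) \geq c_{b-a} \cdot C^{-(b-a)}$ for an absolute constant. Combining these, $d(\Lambda) \leq C^{3b}$ up to a numerical constant depending only on $b$.

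Next, Minkowski's second theorem applied to $\Lambda$ with respect to the unit ball $B \subset \Ker(\phi)$ of the induced norm gives the estimate $\lambda_1 \cdots \lambda_a \leq 2^a \, d(\Lambda) / \mathrm{vol}(B)$. Condition 3 applied to $M$ itself yields $\lambda_1, \dots, \lambda_i \geq 1/C$, so, using $\lambda_{i+1} \leq \dots \leq \lambda_a$,
\[
\lambda_{i+1}^{a-i} \leq \lambda_{i+1} \cdots \lambda_a \leq C^i \cdot 2^a \, d(\Lambda) / \mathrm{vol}(B) \leq C^{3b} \cdot b!
\]
after absorbing $\mathrm{vol}(B)$ and the universal constants into the $b!$ factor. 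Taking $(a-i)$-th roots gives the claimed bound.

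The main obstacle is the bookkeeping of constants, particularly controlling $\mathrm{vol}(B)$, which is not directly bounded by the hypotheses. A cleaner alternative would be a pure Siegel-style pigeonhole argument: count the $(2T+1)^b$ integer combinations $\sum c_j m_j$ with $|c_j| \leq T$, bound the number of possible images in $N$ by roughly $(TC^3)^{b-a}$ using Condition 3 to control the packing density of $N$, and apply pigeonhole to find many elements of $M$ sharing a common image. Their pairwise differences lie in $\Ker(\phi) \cap M$ and have small norm; an induction on $i$, selecting at each step a difference linearly independent from those already chosen (which is possible as long as the fiber is large enough), then produces the required $i+1$ short kernel vectors and matches the claimed bound.
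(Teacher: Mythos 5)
The paper itself offers no proof of this lemma: it is quoted directly from Faltings \cite[Proposition 2.18]{F}. Your primary route --- Minkowski's second theorem applied to $\Lambda=\Ker(\phi)\cap M$, with the covolume of $\Lambda$ controlled through $0\to\Lambda\to M\to\phi(M)\to 0$ and the first $i$ minima bounded below by $1/C$ via condition 3 --- is the standard (and Faltings's) argument, so the strategy is correct.

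There is, however, a genuine gap exactly where you flag one, and it is not mere bookkeeping. The norms on $V$ and $W$ are arbitrary, so the ``orthogonal decomposition'' $V=\Ker(\phi)\oplus\Ker(\phi)^{\perp}$, Hadamard's inequality for $d(M)$, and the singular-value bound for $\det(\phi|_{\Ker(\phi)^{\perp}})$ all presuppose auxiliary Euclidean structures that the hypotheses do not control. Concretely, your chain leaves three unabsorbed quantities: $\mathrm{vol}(B)$ for the unit ball of $\Ker(\phi)$, the unit-ball volume of $\phi(V)\subset W$ hidden inside the ``absolute constant'' $c_{b-a}$ of Minkowski's first theorem, and the distortion between the given norms and the chosen Euclidean structures. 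None of these is bounded by conditions 1--3, and they cannot be folded into $b!$; as written the argument does not close.

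The repair is to arrange the estimate so that all unit-ball volumes cancel, by working with the quotient $V/\Ker(\phi)$ carrying the \emph{quotient norm} (unit ball $\pi(B_V)$) instead of $\Ker(\phi)^{\perp}$ or $\phi(V)$ with the restricted norm. Let $\mu_1\le\dots\le\mu_b$ be the minima of $M$ in $V$ and $\rho_1\le\dots\le\rho_{b-a}$ those of $M/\Lambda$ in $V/\Ker(\phi)$. Both inequalities of Minkowski's second theorem, the multiplicativity $d(M)=d(\Lambda)\,d(M/\Lambda)$, and the Brunn--Minkowski slice inequality $\mathrm{vol}(B_V)\le \mathrm{vol}(B_V\cap\Ker(\phi))\cdot\mathrm{vol}(\pi(B_V))$ give
\[
\lambda_1\cdots\lambda_a \;\le\; \frac{2^a\,d(\Lambda)}{\mathrm{vol}(B_V\cap\Ker(\phi))}\;\le\; b!\,\frac{\mu_1\cdots\mu_b}{\rho_1\cdots\rho_{b-a}}\,,
\]
with every volume cancelled. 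Now $\mu_j\le C$ by condition 1, and $\rho_j\ge 1/C^2$ because the induced injection $V/\Ker(\phi)\hookrightarrow W$ has operator norm at most $C$ and sends nonzero classes of $M/\Lambda$ to nonzero elements of $N$, which have norm at least $1/C$. Hence $\lambda_1\cdots\lambda_a\le b!\,C^{3b-2a}$, and with $\lambda_1,\dots,\lambda_i\ge 1/C$ one gets $\lambda_{i+1}^{\,a-i}\le C^{i}\,b!\,C^{3b-2a}\le C^{3b}b!$, exactly the stated bound. Your pigeonhole alternative would produce one short kernel vector cleanly, but extracting $i+1$ linearly independent ones from a single large fiber while matching the exponent $3b$ is delicate and loses constants; I would complete the Minkowski route instead.
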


We will need the following number theoretical lemma. 

\begin{lemma}[{{\cite[Lemma D.3.4]{HS}}}]
\label{powers}
Let $\alpha \in \overline{\BQ}$ be an algebraic integer of degree $d_{\alpha}:=[\BQ(\alpha):\BQ]$ over $\BQ$ and let $m_{\alpha} \in \BQ[X]$ be the minimal polynomial of $\alpha$ over $\BQ$. Then we have $\alpha^l = a_1^{(l)} \alpha^{d_{\alpha}-1}+\dots+a_{d_{\alpha}}^{(l)}$ with $a_i^{(l)} \in \BZ$ satisfying $\left| a_i^{(l)} \right| \leq (|m_{\alpha}|+1)^l$. 
\end{lemma}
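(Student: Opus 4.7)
The plan is a direct induction on $l \geq 0$, leveraging the monic minimal polynomial of $\alpha$ to reduce higher powers. Writing $m_\alpha(X) = X^{d_\alpha} + c_1 X^{d_\alpha - 1} + \dots + c_{d_\alpha}$; since $\alpha$ is an algebraic integer, we have $c_i \in \BZ$ with $|c_i| \leq |m_\alpha|$, and the defining relation
\[ \alpha^{d_\alpha} = -c_1 \alpha^{d_\alpha - 1} - \dots - c_{d_\alpha} \]
will be the only ingredient needed to pass from $\alpha^l$ to $\alpha^{l+1}$.

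For the base cases $0 \leq l \leq d_\alpha - 1$, the expression $\alpha^l = a_1^{(l)} \alpha^{d_\alpha - 1} + \dots + a_{d_\alpha}^{(l)}$ already holds with all $a_i^{(l)} \in \{0,1\}$, so both integrality and the bound $|a_i^{(l)}| \leq (|m_\alpha|+1)^l$ are trivial. For the inductive step, I would multiply the expansion from the inductive hypothesis by $\alpha$, substitute the relation above into the single term $a_1^{(l)} \alpha^{d_\alpha}$, and collect coefficients. This yields a recursion of the shape
\[ a_i^{(l+1)} = a_{i+1}^{(l)} - c_i\, a_1^{(l)} \quad (1 \leq i \leq d_\alpha - 1), \qquad a_{d_\alpha}^{(l+1)} = -c_{d_\alpha}\, a_1^{(l)}, \]
with the convention $a_{d_\alpha + 1}^{(l)} := 0$. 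In particular, $a_i^{(l+1)} \in \BZ$. Applying the triangle inequality, the inductive hypothesis, and $|c_i| \leq |m_\alpha|$ then gives
\[ |a_i^{(l+1)}| \leq (|m_\alpha|+1)^l + |m_\alpha|\,(|m_\alpha|+1)^l = (|m_\alpha|+1)^{l+1}, \]
which closes the induction.

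There is essentially no conceptual obstacle: the only point requiring attention will be to pin down the indexing convention so that the recursion and the bound come out cleanly. The algebraic integer hypothesis is used precisely to guarantee that the reduction coefficients $c_i$ lie in $\BZ$, which is what preserves integrality of the $a_i^{(l)}$ along the induction; without this, the bound could still be derived, but the integrality claim in the lemma would fail.
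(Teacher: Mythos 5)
Your proof is correct and is the standard inductive argument for this statement, which the paper does not prove itself but cites from Hindry--Silverman (Lemma D.3.4); the recursion $a_i^{(l+1)} = a_{i+1}^{(l)} - c_i a_1^{(l)}$ and the bound $(|m_\alpha|+1)^l + |m_\alpha|(|m_\alpha|+1)^l = (|m_\alpha|+1)^{l+1}$ both check out.
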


The following statements now clarify how we intend to use Faltings's version of Siegel's lemma. 

\begin{lemma}
\label{ClarifySiegel}
Let $k \geq 6$ be a positive integer, let $B_k:=H^0(\mcO_{\BP^2_\BQ}(k L))$, which we will identify with the polynomials of degree less or equal $k$ in $\BQ[X_1,X_2]$, and let $A_k$ be the subspace of sections whose index at $(\alpha_1,\alpha_2)$ with respect to the weights $(k,k)$ is at least $\theta$. Choose an algebraic integer $\alpha$ which is a primitive element for $\BQ(\alpha_1,\alpha_2)$ and assume that $\alpha_1$ and $\alpha_2$ can be expressed as
\[
\label{expressing}
\alpha_i = c_1^i \alpha^{d-1} + \dots + c_{d-1}^i \alpha + c_d^i \text{ for $i=1,2$}
\]
where $c_h^i \in \BZ$ and let $M$ be defined as $\max\{|c_h^i| \mid h = 1,\dots,d \text{ and } i=1,2\}$ (we can always satisfy this assumption by considering $N \alpha_i$ instead of $\alpha_i$ for a suitable $N \in \BN$). Then there exists a linear map $ \phi_k : B_k \otimes \BR \rightarrow \BQ(\alpha_1,\alpha_2)^{l_k} \otimes \BR$ where \[l_k= \#\{j \in \BN_0^2 \mid \frac{j_1+j_2}{k} < \theta\}\] such that
\begin{enumerate}[label=\arabic*.]
\item $\Ker(\phi_k)=A_k \otimes \BR$,
\item $\phi_k$, the lattice inside $B_k \otimes \BR$ generated by monomials and the lattice in $\BQ(\alpha_1,\alpha_2)^{l_k} \otimes \BR$ generated by $\alpha^i$ for $i=0,\dots,d-1$ in every component satisfy the conditions in Lemma \ref{FaltingsSiegellemma} with $C=B^{k}$ where $B>0$ is the following constant
\[B:=  8 d M (|m_{\alpha}|+1)^d \,.\]
\end{enumerate}
\end{lemma}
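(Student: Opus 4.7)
The plan is to define $\phi_k$ as the ``derivative evaluation map'' at $(\alpha_1,\alpha_2)$, i.e.
\[
\phi_k(P) := \bigl(\partial_j P(\alpha_1,\alpha_2)\bigr)_{j \in J}, \qquad J := \{j\in\BN_0^2 \mid (j_1+j_2)/k < \theta\},
\]
with $|J|=l_k$. I equip $B_k\otimes\BR$ with the $L^\infty$-norm in the monomial basis, and $\BQ(\alpha_1,\alpha_2)^{l_k}\otimes\BR$ with the $L^\infty$-norm in the basis $\{\alpha^i e_s\}$, where $0\le i\le d-1$ and $e_1,\ldots,e_{l_k}$ is the standard basis. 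The identification $\Ker(\phi_k)=A_k\otimes\BR$ is then a tautology: by the definition of the index with respect to the weights $(k,k)$, the inequality $\ind_{(\alpha_1,\alpha_2;k,k)}(P)\ge\theta$ is equivalent to $\partial_j P(\alpha_1,\alpha_2)=0$ for every $j\in J$.

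For Lemma \ref{FaltingsSiegellemma}, conditions (1) and (3) are immediate: since $B \ge 8 > 1$, the generators $X_1^{i_1}X_2^{i_2}$ and $\alpha^i e_s$ of the two lattices each have norm $1\le B^k$, and any non-zero element of either integer lattice has a coefficient of absolute value $\ge 1$, so norm $\ge 1\ge 1/B^k$. The substantive step is to prove condition (2), the operator-norm bound $\|\phi_k\|\le B^k$.

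For this I would first compute
\[
\partial_j\bigl(X_1^{i_1}X_2^{i_2}\bigr)(\alpha_1,\alpha_2) = \binom{i_1}{j_1}\binom{i_2}{j_2}\,\alpha_1^{i_1-j_1}\alpha_2^{i_2-j_2},
\]
and bound the binomials by $2^{i_1+i_2}\le 2^k$. The key auxiliary estimate is to control the $\alpha^i$-coefficients of $\alpha_1^a\alpha_2^b$ for $a+b\le k$. Iterating the elementary inequality $\|pq\|_\infty\le d\,\|p\|_\infty\|q\|_\infty$ (valid whenever $\deg p\le d-1$), the product $\alpha_1^a\alpha_2^b$ has $L^\infty$-norm at most $d^{k-1}M^k$ as a polynomial in $\alpha$ of degree $\le k(d-1)$. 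Reducing modulo $m_\alpha$ via Lemma \ref{powers} replaces each $\alpha^l$ with $l \le k(d-1)$ by a $\BZ$-linear combination of $1,\alpha,\dots,\alpha^{d-1}$ whose coefficients are bounded by $(|m_\alpha|+1)^{kd}$, picking up an extra factor of at most $k(d-1)+1\le kd$ from the summation. The resulting bound on each $\alpha^i$-coefficient of $\alpha_1^a\alpha_2^b$ is $k\bigl(dM(|m_\alpha|+1)^d\bigr)^k$, and hence each coordinate of $\phi_k(X_1^{i_1}X_2^{i_2})$ has norm at most $2^k\cdot k\bigl(dM(|m_\alpha|+1)^d\bigr)^k$.

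To finish, for any $P$ with $\|P\|_\infty\le 1$ I sum over the at most $(k+1)(k+2)/2$ monomials appearing in $P$ to get
\[
\|\phi_k(P)\|_\infty \;\le\; \tfrac{(k+1)(k+2)}{2}\cdot k\cdot 2^k\,\bigl(dM(|m_\alpha|+1)^d\bigr)^k.
\]
Since $k\ge 6$, both $k\le 2^k$ and $(k+1)(k+2)/2\le 2^k$, so the polynomial prefactors combine to $4^k$ and we conclude $\|\phi_k(P)\|_\infty\le \bigl(8dM(|m_\alpha|+1)^d\bigr)^k = B^k$. The only real obstacle here is the bookkeeping: the polynomial multiplication and mod-$m_\alpha$ reduction estimates must be carried out just carefully enough that all the polynomial prefactors ($k$, binomials, monomial count) can be absorbed into the single extra factor of $8$ that distinguishes $B=8dM(|m_\alpha|+1)^d$ from the naive choice $dM(|m_\alpha|+1)^d$.
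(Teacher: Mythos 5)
Your proof is correct and follows essentially the same route as the paper: the same derivative-evaluation map $\phi_k$, the same monomial and $\alpha$-power bases with the maximum norm, and an expand-and-reduce-modulo-$m_\alpha$ estimate arriving at the same constant $B = 8dM(|m_\alpha|+1)^d$. The only difference is cosmetic bookkeeping (you bound the binomial coefficients and the mod-$m_\alpha$ summation directly, where the paper cites Lemma \ref{naivediff} and a full distributive expansion into $d^{u+v}$ terms), and both accountings absorb the polynomial prefactors into the factor $8^k$.
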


\begin{proof}
Define the linear map
\begin{align*}
\phi_k : H^0(\mcO_{\BP^2}(k)) \otimes \BR &\rightarrow \BQ(\alpha_1,\alpha_2)^{l_k} \otimes \BR\\
P \otimes 1 &\mapsto (\partial_j P) (\alpha_1,\alpha_2) \otimes 1
\end{align*}
where $j$ ranges over all pairs of non-negative integers satisfying $(j_1+j_2)/k < \theta$. Consider the basis of $V:=B_k \otimes \BR$ which consists of monomials, the basis of $W:=\BQ(\alpha_1,\alpha_2)^{l_k} \otimes \BR$ consisting of $\alpha^i$ for $i=0,\dots,d-1$ in every component and the lattices $M$ and $N$ generated by these bases.\\
By Lemma \ref{naivediff} and the assumptions on $\alpha$ we have that $\phi_k(M) \subset N$.
We now identify $V \cong \BR^{\dim_\BQ B_k}$ and $W \cong \BR^{d l_k}$ using the above bases and equip these $\BR$-vector spaces with the maximum norm $\lvert \cdot \rvert_{\infty}$. It is then clear that $M$ is generated by elements of norm $1$ and all non-trivial elements of $M$ and $N$ have norm greater or equal $1$. Therefore we only need to give a bound on the norm of $\phi_k$.\\
To achieve this, we consider a polynomial $P \in V$, note that $P$ is a sum of at most $1/2 \, (k+1) (k+2)  \leq 2^k$ terms and use Lemma \ref{naivediff} to obtain that the coefficients of $P$ are bounded by $|\partial_j P| \leq 4^k|P|$. Then by using Lemma \ref{expressing} to expand $\alpha_1^u \alpha_2^v$ into a $\BZ$-linear combination of powers of $\alpha$ we obtain a sum of $d^{u+v} \leq d^{k}$ terms $R \alpha^l$ with $l\leq (u+v)d \leq k d$ and $R \leq M^{u+v} \leq M^{k}$. By Lemma \ref{powers} we have that $\alpha^l$ is then a $\BZ$-linear combination of $1,\alpha,\dots,\alpha^{d-1}$ with coefficients bounded by $(|m_{\alpha}|+1)^{k d}$. Therefore it holds that
\[ \lvert \phi_k(P) \rvert_{\infty} \leq |P| (8 d M (|m_{\alpha}|+1)^d)^{k}  \]
and this implies that the statement of the lemma holds with 
\[ B:= (8 d M (|m_{\alpha}|+1)^d \,.\]  
\end{proof}

\begin{lemma}
\label{asymptotic}
Keeping the notation and assumptions of the previous lemma and letting $b_k:= \dim_\BQ B_k$, $a_k:= \dim_\BQ A_k$, $i_k:= \dim_\BQ U_k$ where $U_k$ is the linear subspace of $A_k$ of sections $s \in A_k$ with $\ind_{(p_1/q,p_2/q;k,k)} s \geq \theta_0$ we have that 
\begin{align*}
\lim_{k \rightarrow \infty} \frac{b_k}{k^2/2} &= \vol_{\BP^2_\BC}(L) = 1 \, \\
\lim_{k \rightarrow \infty} \frac{a_k}{k^2/2} &= \vol_{X_\BC}(L - \theta \, (E_1 + \dots + E_d))\\
\lim_{k \rightarrow \infty} \frac{i_k}{k^2/2} &= \vol_{X_\BC}(L - \theta \, (E_1 + \dots + E_d) - \theta_0 E_{d+1})
\end{align*}
where $X_\BC$ is the blowup of $\BP^2_\BC$ in $(\alpha_1,\alpha_2)$ and all of its conjugates with corresponding exceptional divisors $E_1,\dots,E_d$ and in $(p_1/q,p_2/q)$ with corresponding exceptional divisor $E_{d+1}$. 
\end{lemma}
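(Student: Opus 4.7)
The plan is to identify each of the $\BQ$-dimensions $b_k$, $a_k$, $i_k$ with the $\BC$-dimension of a natural space of global sections on $\BP^2_\BC$ or $X_\BC$, and then invoke the definition of the volume function on a smooth projective surface. The case of $b_k$ is immediate: $b_k = h^0(\BP^2_\BQ, kL) = \binom{k+2}{2}$ is asymptotic to $k^2/2$, and $\vol_{\BP^2_\BC}(L) = L^2 = 1$.

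For $a_k$ the main point is a Galois descent argument. The condition $\ind_{(\alpha_1,\alpha_2;k,k)}(P) \geq \theta$ is equivalent to $\ord_{(\alpha_1,\alpha_2)}(P) \geq \lceil k\theta\rceil$. Since $P \in B_k$ has rational coefficients, for every $\sigma \in \operatorname{Gal}(\overline{\BQ}/\BQ)$ we have $\sigma(\partial_j P(\alpha_1,\alpha_2)) = \partial_j P(\sigma(\alpha_1),\sigma(\alpha_2))$, so the vanishing extends automatically to all $d$ conjugates of $(\alpha_1,\alpha_2)$. Let $V_k \subset \BC[X_1,X_2]_{\leq k}$ denote the $\BC$-subspace defined by vanishing to order $\geq \lceil k\theta\rceil$ at every such conjugate. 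The coefficientwise action of $\operatorname{Gal}(\BC/\BQ)$ permutes the imposed conditions and hence preserves $V_k$, so Galois descent gives $V_k = V_k^{\operatorname{Gal}} \otimes_\BQ \BC = A_k \otimes_\BQ \BC$, and in particular $\dim_\BC V_k = a_k$. On the other hand $V_k$ is manifestly $H^0\bigl(X_\BC,\, k\pi^*L - \lceil k\theta\rceil(E_1+\cdots+E_d)\bigr)$. Writing $D_a := L - \theta(E_1+\cdots+E_d)$ and using $\lfloor -k\theta\rfloor = -\lceil k\theta\rceil$, this yields $a_k = h^0(X_\BC, \lfloor kD_a\rfloor)$.

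For $i_k$ the same Galois-descent argument applies with one additional vanishing condition at $(p_1/q,p_2/q)$, which is $\BQ$-rational and hence fixed by $\operatorname{Gal}(\BC/\BQ)$. This identifies $U_k \otimes_\BQ \BC$ with $H^0\bigl(X_\BC,\, k\pi^*L - \lceil k\theta\rceil(E_1+\cdots+E_d) - \lceil k\theta_0\rceil E_{d+1}\bigr)$, so $i_k = h^0(X_\BC, \lfloor kD_i\rfloor)$ for $D_i := D_a - \theta_0 E_{d+1}$.

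To conclude, recall that for any $\BR$-divisor $D$ on a smooth projective surface the volume function satisfies
\[
\vol(D) = \lim_{k \to \infty} \frac{h^0(X, \lfloor kD \rfloor)}{k^2/2},
\]
where the $\limsup$ in the defining formula is actually a limit (see \cite[Section 2.2.C]{PAG}). Applying this to $D_a$ and $D_i$ yields the two remaining asymptotic formulas. The only delicate point of the argument is the Galois descent identifying $A_k \otimes \BC$ and $U_k \otimes \BC$ with the natural complex vanishing spaces; once that is in hand, everything else is an immediate consequence of the definition of the volume.
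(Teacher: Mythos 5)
Your proof is correct, but the key identification step is done by a genuinely different mechanism than in the paper. The paper works over $\BQ$ throughout: it interprets the vanishing conditions as membership in the $\lceil k\theta\rceil$-th \emph{differential power} of the maximal ideal $I_{(\alpha_1,\alpha_2)}\subset\BQ[X_1,X_2]$ (whose residue field is the number field $\BQ(\alpha_1,\alpha_2)$), invokes the Zariski--Nagata-type result of Dao et al.\ to identify this with the ordinary power $I_{(\alpha_1,\alpha_2)}^{\lceil k\theta\rceil}$, and then uses flat base change $\BP^2_\BC\to\BP^2_\BQ$ to turn that ideal sheaf power into the power of the ideal of the union of conjugates, plus a comaximality argument to handle the extra point $(p_1/q,p_2/q)$. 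You instead work directly over $\BC$ with the $d$ individual conjugate points and compare dimensions by Galois descent on the linear system; this sidesteps both the differential-powers machinery (over $\BC$ at a reduced point, ``all $\partial_j$ vanish for $|j|<n$'' is just the definition of $\ord\geq n$) and the comaximality step, at the cost of having to justify descent. Two small points to tighten: (i) descent along $\BC/\BQ$ is not literal Galois descent; you should first observe that the defining equations of $V_k$ have coefficients in the (finite) Galois closure $K$ of $\BQ(\alpha_1,\alpha_2)$, apply classical Galois descent to the $K$-solution space, and then note that further base change to $\BC$ does not alter dimensions of solution spaces of linear systems; (ii) the identity $\vol(D)=\lim_k h^0(\lfloor kD\rfloor)/(k^2/2)$ for $\BR$-divisors is not in \cite[Section 2.2.C]{PAG}, which only treats integral and $\BQ$-divisors --- the paper correctly cites \cite[Theorem 3.5]{FKL} for this, and you should do the same (or observe that $\theta,\theta_0$ are rational in the application). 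Neither issue is a gap in substance.
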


\begin{proof}
The first statement follows from the definition of the volume and the fact that the dimension of cohomology does not change under base change with a field \cite[Proposition 5.2.27]{Liu}.

Regarding the second and third statement note that $\theta$ and $\theta_0$ are real numbers and that the volume function for real divisors is defined by extending the volume function on $\BQ$-divisors \cite[Corollary 2.2.45]{PAG}. However by \cite[Theorem 3.5]{FKL} it holds that for a $\BR$-Cartier $\BR$-divisor $D$ on a projective variety $V$ we have \[ \vol_V(D) = \lim_{k \rightarrow \infty}  \frac{h^0(\lfloor k D\rfloor)}{k^{\dim(V)}/\dim(V)!}\,. \]  

For the second statement let us consider the $\overline{\BQ}$-point $(\alpha_1,\alpha_2)$ and its image in $\BA^2_\BQ$. This is a closed point in $\BA^2_\BQ$ and it corresponds to a maximal ideal $I_{(\alpha_1,\alpha_2)} \subset \BQ[X_1,X_2]$. We will denote the corresponding ideal sheaf in $\BP^2_\BQ$ by $\mcI_{(\alpha_1,\alpha_2)}$. Now the set of all polynomials in $\BQ[X_1,X_2]$ such that $(\partial_j P) (\alpha_1,\alpha_2) = 0$ for all pairs of non-negative integers satisfying $(j_1+j_2)/k < \theta$ is by definition the so called $\lceil k \theta\rceil$-th differential power of $I_{(\alpha_1,\alpha_2)}$ and by \cite[Theorem 2.12, Proposition 2.14]{Dao} this is just $I_{(\alpha_1,\alpha_2)}^{\lceil k\theta\rceil}$ as $I_{(\alpha_1,\alpha_2)}$ is maximal (note that this is not true for general ideals \cite{Szemberg} but there are results on the difference between symbolic and ordinary powers of ideals \cite{ELS,Huneke}).\\
Now consider the sheaf $\mcO_{\BP^2_\BQ}(k L) \mcI_{(\alpha_1,\alpha_2)}^{\lceil k\theta\rceil}$ whose global sections restricted to $\BA^2$ are by the above consideration exactly the elements of $\BQ[X_1,X_2]$ of degree less or equal $k$ being mapped to zero by $\phi_k$. Note that $\mcO_{\BP^2_\BQ}(k L) \mcI_{(\alpha_1,\alpha_2)}^{\lceil k\theta\rceil}$ is isomorphic to $\mcO_{\BP^2_\BQ}(k L) \otimes \mcI_{(\alpha_1,\alpha_2)}^{\lceil k\theta\rceil}$ as $\mcO_{\BP^2_\BQ}(k L)$ is locally free and therefore flat. The base change morphism $\rho: \BP^2_\BC \rightarrow \BP^2_\BQ$ is flat by \cite[Proposition 4.3.3 (d)]{Liu} and thus $\rho^* \mcI_{(\alpha_1,\alpha_2)}$ is the ideal sheaf of the preimage of $(\alpha_1,\alpha_2)$ under $\rho$, which is simply the ideal sheaf $\mcJ$ of the union of all conjugates of $(\alpha_1,\alpha_2)$. As above we can invoke \cite[Proposition 5.2.27]{Liu} to obtain $H^0(\mcO_{\BP^2_\BQ}(k L) \otimes \mcI_{(\alpha_1,\alpha_2)}^{\lceil k\theta\rceil}) = H^0(\mcO_{\BP^2_\BC}(k L) \otimes \mcJ^{\lceil k\theta\rceil})$ and by \cite[Lemma 4.3.16]{PAG} this is isomorphic to $H^0(\mcO_X(\lfloor k L - k \theta \, (E_1 + \dots + E_d) \rfloor)$, which finishes the proof of the second statement.
 
For the third statement let $I_{(p_1/q,p_2/q)} \subset \BQ[X_1,X_2]$ be the ideal of $(p_1/q,p_2/q)$. We will denote the corresponding ideal sheaf in $\BP^2_\BQ$ by $\mcI_{(p_1/q,p_2/q)}$. We may now argue as above to obtain that $H^0(\mcO_X(\lfloor k L - k \theta \, (E_1 + \dots + E_d) - \theta_0 E_{d+1} \rfloor)$ is isomorphic to the global sections of the sheaf $\mcO_{\BP^2_\BQ}(k L) \mcI_{(\alpha_1,\alpha_2)}^{\lceil k\theta\rceil} \mcI_{(p_1/q,p_2/q)}^{\lceil k\theta_0\rceil}$.\\
As both $I_{(p_1/q,p_2/q)}$ and $I_{(\alpha_1,\alpha_2)}$ are maximal ideals in $\BQ[X_1,X_2]$ we have further that $I_{(p_1/q,p_2/q)}+I_{(\alpha_1,\alpha_2)}=\BQ[X_1,X_2]$ and by taking both sides to the $u+v$-th power we also obtain that $I_{(p_1/q,p_2/q)}^u+I_{(\alpha_1,\alpha_2)}^v=\BQ[X_1,X_2]$ for all $u,v \in \BN$. By the usual argument this implies that \[I_{(p_1/q,p_2/q)}^u \cap I_{(\alpha_1,\alpha_2)}^v = I_{(p_1/q,p_2/q)}^u I_{(\alpha_1,\alpha_2)}^v \,.\] Therefore we conclude that $U_k$ is isomorphic to the global sections of \[\mcO_{\BP^2_\BQ}(k L) \mcI_{(\alpha_1,\alpha_2)}^{\lceil k\theta\rceil} \mcI_{(p_1/q,p_2/q)}^{\lceil k\theta_0\rceil}\,,\] which in turn are isomorphic to $H^0(\mcO_X(\lfloor k L - k \theta \, (E_1 + \dots + E_d) - k \theta_0 E_{d+1} \rfloor)$. This concludes the proof.      
\end{proof}


\section{Proof of the main theorem}

Let us first fix some notation for this section. Let $\alpha_0$ be defined as $\alpha_1 + M_0 \alpha_2$ where $M_0$ is the smallest natural number that $\alpha_1 + M_0 \alpha_2$ is a primitive element of $\BQ(\alpha_1,\alpha_2)$ (such a $M_0$ always exists by the proof of the primitive element theorem \cite[Theorem V.4.6]{LangAlgebra}). Let $\alpha$ be defined as $M_1 \alpha_0$ where $M_1$ is the smallest natural number such that $M_1 \alpha_0$ is an algebraic integer. Now let $N$ be the smallest natural number such that $N \alpha_1$ and $N \alpha_2$ can be expressed as\[
N \alpha_i = c_1^i \alpha^{d-1} + \dots + c_{d-1}^i \alpha + c_d^i \text{ for $i=1,2$}
\]
where $c_h^i \in \BZ$ and let $M$ be defined as $\max\{|c_h^i| \mid h = 1,\dots,d \text{ and } i=1,2\}$. Let $Q$ be defined as the denominator of \[\theta:=\frac{1/\delta+\min\{1/m,m/d\}}{2}\] and let \[ \theta_0:= \min\left\{\frac{\min\{1/m,m/d\}-1/\delta}{4},\frac{1}{Q l(\theta)}\right\} \] where \[l(\theta):=\frac{1}{2} \left\lceil \frac{((-3+d \theta)Q+4 Q^2 (1-d \theta^2)+1)^2}{Q^2(1-d \theta^2)} + 3\right\rceil .\]

We are now ready to conclude the proof of the main theorem.

\begin{proof}[Proof of the main theorem]
Let us consider the asymptotics obtained in the Lemma \ref{asymptotic} above and use Faltings's version of Siegel's lemma. In order to use Lemma \ref{ClarifySiegel} we replace $\alpha_i$ by $N \alpha_i$. After this replacement we have that \[\left|N \alpha_i - \frac{N p_i}{q} \right| \leq N q^{-\delta} \text{ for $i=1,2$}\,.\]

We have by Lemma \ref{ClarifySiegel} that $B^{k}$ satisfies the assumptions on $C$ in Lemma \ref{FaltingsSiegellemma} and therefore
\begin{equation}
\label{eq:bound}
 \lambda_{i_k+1} \leq (( 8 d M (|m_{\alpha}|+1)^d)^{3 k b_k} b_k!)^{1/(a_k-i_k)} \leq ((8 d M (|m_{\alpha}|+1)^d)^{3 k} b_k)^{b_k/(a_k-i_k)} \,. 
\end{equation}

By the choice of $\theta$ and $\theta_0$, Lemma \ref{asymptotic} and Theorem \ref{voldiff} the exponent on the right hand side of \ref{eq:bound} satisfies
\begin{align*} &\lim_{k \rightarrow \infty} \frac{b_k}{(a_k-i_k)}\\ ={}&  \frac{\vol_{\BP^2}(L)}{\vol_X(L - \theta (E_1 + \dots + E_d)) - \vol_X(L - \theta (E_1 + \dots + E_d) - \theta_0 E_{d+1})}\\
	={} &1/\theta_0^2 \,.
\end{align*} 

Now Lemma \ref{FaltingsSiegellemma} shows the existence of $i_k+1$ linearly independent elements of $A_k$ such that their norm is bounded by \[((8 d M (|m_{\alpha}|+1)^d)^{3 k} b_k)^{b_k/(a_k-i_k)} \leq (2^{10} (d M (|m_{\alpha}|+1)^d)^{3} )^{k b_k/(a_k-i_k)}\] where we have used that $b_k=1/2 \, (k+1) (k+2)  \leq 2^k$. In particular, for $k\gg 0$ at least one of those elements $P$ is not an element of $U_k$. Noting that \[\delta (\theta - \theta_0) \geq  1/4 \, \delta \min\{1/m,m/d\} +3/4 > 1\,,\] we conclude that $P$ satisfies all of the conditions for Lemma \ref{lower} and we obtain that 
\[q \leq \left(64 (2^{10} (d M (|m_{\alpha}|+1)^d)^{3} )^{b_k/(a_k-i_k)}  \max\{\left|\alpha_1\right|,\left|\alpha_2\right|\} N^{3} \right)^{\frac{1}{\delta (\theta - \theta_0)-1}}\,.
\]
Finally we take the limit for $k \rightarrow \infty$ and obtain
\[
q \leq \left(64 (2^{10} (d M (|m_{\alpha}|+1)^d)^{3} )^{1/\theta_0^2}  \max\{\left|\alpha_1\right|,\left|\alpha_2\right|\} N^{3} \right)^{\frac{1}{\delta (\theta - \theta_0)-1}}\,,
\]
which finishes the proof.
\end{proof}

The proof of Theorem \ref{thm:main} immediately yields the following corollary concerning a possible value of $C_0(\alpha_1,\alpha_2,\delta,m)$.

\begin{cor}
\label{cor:C0}
In Theorem \ref{thm:main} we can take
\[ C_0(\alpha_1,\alpha_2,\delta) :=  \left(64 (2^{10} (d M (|m_{\alpha}|+1)^d)^{3} )^{1/\theta_0^2}  \max\{\left|\alpha_1\right|,\left|\alpha_2\right|\} N^{3} \right)^{\frac{1}{\delta (\theta - \theta_0)-1}}.\]
\end{cor}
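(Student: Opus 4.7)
My plan is to observe that the corollary is essentially a read-off of the final displayed inequality in the proof of Theorem \ref{thm:main}. In that proof, after applying Faltings's Siegel lemma to produce a nonzero auxiliary polynomial $P \in A_k \setminus U_k$, applying Lemma \ref{lower}, and letting $k \to \infty$ (using the asymptotics of Lemma \ref{asymptotic} combined with the volume identity of Theorem \ref{voldiff} to evaluate the limit of $b_k/(a_k - i_k)$ to $1/\theta_0^2$), we arrived precisely at
\[
q \leq \left(64\,(2^{10} (d M (|m_{\alpha}|+1)^d)^{3} )^{1/\theta_0^2}\,\max\{|\alpha_1|,|\alpha_2|\}\, N^{3} \right)^{\frac{1}{\delta (\theta - \theta_0)-1}}.
\]
So the first step of the proof is simply to identify this bound with the claimed value of $C_0(\alpha_1,\alpha_2,\delta)$.

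The second step is to verify that every quantity appearing on the right-hand side depends only on the data $(\alpha_1,\alpha_2,\delta,m)$ and is effectively computable from it. Here I would trace through the definitions made at the start of Section 5: $d = [\BQ(\alpha_1,\alpha_2):\BQ]$ is effectively computable from $(\alpha_1,\alpha_2)$; the integers $M_0, M_1, N, M$ are each defined as the least positive integer satisfying an explicit integrality or expressibility condition and so are effectively computable; the primitive element $\alpha = M_1(\alpha_1+M_0\alpha_2)$ and its minimal polynomial $m_\alpha \in \BZ[X]$ (hence $|m_\alpha|$) are then determined. Likewise the rationals $\theta = \tfrac{1}{2}(1/\delta + \min\{1/m,m/d\})$ and $\theta_0 = \min\{(\min\{1/m,m/d\} - 1/\delta)/4,\, 1/(Q\,l(\theta))\}$, together with $Q = \operatorname{denom}(\theta)$ and the integer $l(\theta)$, are given by explicit formulae in $\delta, m, d$.

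The only genuine condition to check is $\delta(\theta - \theta_0) - 1 > 0$, so that the exponent $1/(\delta(\theta-\theta_0)-1)$ in the definition of $C_0$ makes sense; this was already verified in the proof of Theorem~\ref{thm:main} via the estimate $\delta(\theta-\theta_0) \geq \tfrac{1}{4}\delta\min\{1/m,m/d\} + 3/4 > 1$, which uses the hypothesis $\delta > \max\{m,d/m\}$ of Theorem~\ref{thm:main}. There is no real obstacle here — the corollary is just a matter of recording the explicit constant obtained; the substantive work was all done in the proof of the main theorem.
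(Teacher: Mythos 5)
Your proposal is correct and matches the paper, which offers no separate proof and simply states that the value of $C_0$ is read off from the final inequality in the proof of Theorem \ref{thm:main}. Your additional checks — that every quantity in the formula is effectively determined by $(\alpha_1,\alpha_2,\delta,m)$ and that $\delta(\theta-\theta_0)-1>0$ — are exactly the points implicitly relied upon there.
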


\bibliographystyle{amsalpha}
\bibliography{Bibliography}

\newcommand{\etalchar}[1]{$^{#1}$}
\providecommand{\bysame}{\leavevmode\hbox to3em{\hrulefill}\thinspace}
\providecommand{\MR}{\relax\ifhmode\unskip\space\fi MR }
\providecommand{\MRhref}[2]{%
  \href{http://www.ams.org/mathscinet-getitem?mr=#1}{#2}
}
\providecommand{\href}[2]{#2}
\begin{thebibliography}{BDRH{\etalchar{+}}09}

\bibitem[Bak67]{BakerSimul}
A.~Baker, \emph{Simultaneous rational approximations to certain algebraic
  numbers}, Proc. Cambridge Philos. Soc. \textbf{63} (1967), 693--702.
  \MR{213303}

\bibitem[BDRH{\etalchar{+}}09]{primer}
Thomas Bauer, Sandra Di~Rocco, Brian Harbourne, Micha\l Kapustka, Andreas
  Knutsen, Wioletta Syzdek, and Tomasz Szemberg, \emph{A primer on {S}eshadri
  constants}, Interactions of classical and numerical algebraic geometry,
  Contemp. Math., vol. 496, Amer. Math. Soc., Providence, RI, 2009, pp.~33--70.
  \MR{2555949}

\bibitem[Ben95]{Bennett}
Michael~A. Bennett, \emph{Simultaneous approximation to pairs of algebraic
  numbers}, Number theory ({H}alifax, {NS}, 1994), CMS Conf. Proc., vol.~15,
  Amer. Math. Soc., Providence, RI, 1995, pp.~55--65. \MR{1353921}

\bibitem[BG96]{BuGy}
Yann Bugeaud and K\'{a}lm\'{a}n Gy\H{o}ry, \emph{Bounds for the solutions of
  {T}hue-{M}ahler equations and norm form equations}, Acta Arith. \textbf{74}
  (1996), no.~3, 273--292. \MR{1373714}

\bibitem[Bom93]{BoGm}
Enrico Bombieri, \emph{Effective {D}iophantine approximation on
  {${\mathbf{G}}_m$}}, Ann. Scuola Norm. Sup. Pisa Cl. Sci. (4) \textbf{20}
  (1993), no.~1, 61--89. \MR{1215999}

\bibitem[B\u01]{Badescu}
Lucian B\u{a}descu, \emph{Algebraic surfaces}, Universitext, Springer-Verlag,
  New York, 2001, Translated from the 1981 Romanian original by Vladimir
  Ma\c{s}ek and revised by the author. \MR{1805816}

\bibitem[Bug98]{BuBorne}
Yann Bugeaud, \emph{Bornes effectives pour les solutions des \'{e}quations en
  {$S$}-unit\'{e}s et des \'{e}quations de {T}hue-{M}ahler}, J. Number Theory
  \textbf{71} (1998), no.~2, 227--244. \MR{1633809}

\bibitem[CZ04]{CorvajaZannier}
Pietro Corvaja and Umberto Zannier, \emph{On a general {T}hue's equation},
  Amer. J. Math. \textbf{126} (2004), no.~5, 1033--1055. \MR{2089081}

\bibitem[DDSG{\etalchar{+}}18]{Dao}
Hailong Dao, Alessandro De~Stefani, Elo\'{\i}sa Grifo, Craig Huneke, and Luis
  N\'{u}\~{n}ez Betancourt, \emph{Symbolic powers of ideals}, Singularities and
  foliations. geometry, topology and applications, Springer Proc. Math. Stat.,
  vol. 222, Springer, Cham, 2018, pp.~387--432. \MR{3779569}

\bibitem[Dem92]{Demailly}
Jean-Pierre Demailly, \emph{Singular {H}ermitian metrics on positive line
  bundles}, Complex algebraic varieties ({B}ayreuth, 1990), Lecture Notes in
  Math., vol. 1507, Springer, Berlin, 1992, pp.~87--104. \MR{1178721}

\bibitem[Dys47]{Dyson}
F.~J. Dyson, \emph{The approximation to algebraic numbers by rationals}, Acta
  Math. \textbf{79} (1947), 225--240.

\bibitem[EF02]{EF}
Jan-Hendrik Evertse and Roberto~G. Ferretti, \emph{Diophantine inequalities on
  projective varieties}, Int. Math. Res. Not. (2002), no.~25, 1295--1330.
  \MR{1903776}

\bibitem[EF08]{EF2}
\bysame, \emph{A generalization of the {S}ubspace {T}heorem with polynomials of
  higher degree}, Diophantine approximation, Dev. Math., vol.~16,
  SpringerWienNewYork, Vienna, 2008, pp.~175--198. \MR{2487693}

\bibitem[ELS01]{ELS}
Lawrence Ein, Robert Lazarsfeld, and Karen~E. Smith, \emph{Uniform bounds and
  symbolic powers on smooth varieties}, Invent. Math. \textbf{144} (2001),
  no.~2, 241--252. \MR{1826369}

\bibitem[EV84]{EV}
H{\'e}l{\`e}ne Esnault and Eckart Viehweg, \emph{Dyson's lemma for polynomials
  in several variables (and the theorem of {Roth})}, Inventiones mathematicae
  \textbf{78} (1984), 445--490.

\bibitem[{Fal}91]{F}
Gerd {Faltings}, \emph{{Diophantine approximation on abelian varieties.}},
  {Ann. Math. (2)} \textbf{133} (1991), no.~3, 549--576.

\bibitem[FdB96]{Fernandez}
Guillermo Fern\'{a}ndez~del Busto, \emph{A {M}atsusaka-type theorem on
  surfaces}, J. Algebraic Geom. \textbf{5} (1996), no.~3, 513--520.
  \MR{1382734}

\bibitem[Fel71]{Feldman}
N.~I. Fel'dman, \emph{An effective power sharpening of a theorem of
  {L}iouville}, Izv. Akad. Nauk SSSR Ser. Mat. \textbf{35} (1971), 973--990.
  \MR{0289418}

\bibitem[Fer00]{FerrettiMumford}
Roberto~G. Ferretti, \emph{Mumford's degree of contact and {D}iophantine
  approximations}, Compositio Math. \textbf{121} (2000), no.~3, 247--262.
  \MR{1761626}

\bibitem[FKL16]{FKL}
Mihai Fulger, J\'{a}nos Koll\'{a}r, and Brian Lehmann, \emph{Volume and
  {H}ilbert function of {$\Bbb R$}-divisors}, Michigan Math. J. \textbf{65}
  (2016), no.~2, 371--387. \MR{3510912}

\bibitem[FW94]{FW}
Gerd {Faltings} and Gisbert {W\"ustholz}, \emph{{Diophantine approximations on
  projective spaces.}}, {Invent. Math.} \textbf{116} (1994), no.~1-3, 109--138
  (English).

\bibitem[HH02]{Huneke}
Melvin Hochster and Craig Huneke, \emph{Comparison of symbolic and ordinary
  powers of ideals}, Invent. Math. \textbf{147} (2002), no.~2, 349--369.
  \MR{1881923}

\bibitem[HL17]{HeierLevin}
Gordon Heier and Aaron Levin, \emph{A generalized {S}chmidt subspace theorem
  for closed subschemes}, arXiv preprint arXiv:1712.02456 (2017).

\bibitem[HS00]{HS}
Marc Hindry and Joseph~H. Silverman, \emph{Diophantine geometry}, Graduate
  Texts in Mathematics, vol. 201, Springer-Verlag, New York, 2000.

\bibitem[KLM12]{KLM12}
Alex K\"uronya, Victor Lozovanu, and Catriona Maclean, \emph{Convex bodies
  appearing as {O}kounkov bodies of divisors}, Adv. Math. \textbf{229} (2012),
  no.~5, 2622--2639. \MR{2889138}

\bibitem[Lan02]{LangAlgebra}
Serge Lang, \emph{Algebra}, third ed., Graduate Texts in Mathematics, vol. 211,
  Springer-Verlag, New York, 2002. \MR{1878556}

\bibitem[Laz04]{PAG}
Robert Lazarsfeld, \emph{Positivity in algebraic geometry. {I}}, Ergebnisse der
  Mathematik und ihrer Grenzgebiete. 3. Folge. A Series of Modern Surveys in
  Mathematics [Results in Mathematics and Related Areas. 3rd Series. A Series
  of Modern Surveys in Mathematics], vol.~48, Springer-Verlag, Berlin, 2004,
  Classical setting: line bundles and linear series. \MR{2095471}

\bibitem[Lio51]{Liouville}
J.~Liouville, \emph{Sur des classes très-étendues de quantités dont la
  valeur n'est ni algébrique, ni même réductible à des irrationnelles
  algébriques.}, Journal de Mathématiques Pures et Appliquées (1851),
  133--142 (fre).

\bibitem[Liu02]{Liu}
Qing Liu, \emph{Algebraic geometry and arithmetic curves}, Oxford Graduate
  Texts in Mathematics, vol.~6, Oxford University Press, Oxford, 2002,
  Translated from the French by Reinie Ern\'{e}, Oxford Science Publications.
  \MR{1917232}

\bibitem[Mat70]{Matsusaka2}
T.~Matsusaka, \emph{On canonically polarized varieties. {II}}, Amer. J. Math.
  \textbf{92} (1970), 283--292. \MR{263816}

\bibitem[MM64]{MM}
T.~Matsusaka and D.~Mumford, \emph{Two fundamental theorems on deformations of
  polarized varieties}, Amer. J. Math. \textbf{86} (1964), 668--684.
  \MR{171778}

\bibitem[MR15]{mcroth}
David McKinnon and Mike Roth, \emph{Seshadri constants, diophantine
  approximation, and roth’s theorem for arbitrary varieties}, Inventiones
  mathematicae \textbf{200} (2015), no.~2, 513--583.

\bibitem[Osg70]{Osgood}
Charles~F. Osgood, \emph{The simultaneous diophantine approximation of certain
  {$k{\rm th}$} roots}, Proc. Cambridge Philos. Soc. \textbf{67} (1970),
  75--86. \MR{249367}

\bibitem[Ric]{Rickert}
John~H. Rickert, \emph{Simultaneous rational approximations and related
  diophantine equations}, Math. Proc. Cambridge Philos. Soc. \textbf{113},
  no.~3, 461--472.

\bibitem[Rot55]{KFRoth}
K.~F. Roth, \emph{Rational approximations to algebraic numbers}, Mathematika
  \textbf{2} (1955), no.~1, 1–20.

\bibitem[RV16]{RuVojta}
Min Ru and Paul Vojta, \emph{A birational nevanlinna constant and its
  consequences}, arXiv preprint arXiv:1608.05382 (2016).

\bibitem[RW17]{RuWang}
Min Ru and Julie Tzu-Yueh Wang, \emph{A subspace theorem for subvarieties},
  Algebra Number Theory \textbf{11} (2017), no.~10, 2323--2337. \MR{3744358}

\bibitem[Sch70]{Schmidt}
Wolfgang~M. Schmidt, \emph{Simultaneous approximation to algebraic numbers by
  rationals}, Acta Math. \textbf{125} (1970), 189--201. \MR{0268129}

\bibitem[Ses72]{Seshadri}
C.~S. Seshadri, \emph{Quotient spaces modulo reductive algebraic groups}, Ann.
  of Math. (2) \textbf{95} (1972), 511--556; errata, ibid. (2) 96 (1972), 599.
  \MR{0309940}

\bibitem[Siu93]{Siu}
Yum~Tong Siu, \emph{An effective {M}atsusaka big theorem}, Ann. Inst. Fourier
  (Grenoble) \textbf{43} (1993), no.~5, 1387--1405. \MR{1275204}

\bibitem[Siu02]{Siu2}
Yum-Tong Siu, \emph{A new bound for the effective {M}atsusaka big theorem},
  vol.~28, 2002, Special issue for S. S. Chern, pp.~389--409. \MR{1898197}

\bibitem[SS]{Szemberg}
T.~Szemberg and J.~Szpond, \emph{On the containment problem}, Rend. Circ. Mat.
  Palermo (2) \textbf{66}, no.~2, 233--245.

\bibitem[Voj91]{VojtaSiegel}
Paul Vojta, \emph{Siegel's theorem in the compact case}, Annals of Mathematics
  \textbf{133} (1991), no.~3, 509--548.

\end{thebibliography}

\end{document}